\let\oldmarginpar\marginpar
\def\marginpar#1{\oldmarginpar{\raggedright\linespread{1}
\tiny #1}}
\DeclareRobustCommand\de{\mathrm{d}}
\newtheorem{theorem}{\indent\sc Theorem}[section]
\newtheorem{lemma}[theorem]{\indent\sc Lemma}
\newtheorem{corollary}[theorem]{\indent\sc Corollary}
\newtheorem{claim}[theorem]{\indent\sc Claim}
\newtheorem{example}[theorem]{\indent\sc Example}
\theoremstyle{definition}
\numberwithin{equation}{section}
\thanks{No data are associated with this article. For the purpose of open access, the authors have
applied a Creative Commons Attribution (CC BY) licence to any Author Accepted Manuscript
version arising from this submission.}
\author{Vincenzo Mantova}
\address{School of Mathematics, University of Leeds, LS2 9JT, United Kingdom}
\email{V.L.Mantova@leeds.ac.uk}
\thanks{V.M. is supported by the EPSRC grant EP/T018461/1.}
\author{David Masser}
\address{\foreignlanguage{ngerman}{Departement Mathematik und Informatik, Universität Basel, Spiegelgasse 1, 4051 Basel}, Switzerland}
\email{David.Masser@unibas.ch}
\title{Polynomial-exponential equations -- some new cases of solvability}
\date{22nd July 2024}
\subjclass[2010]{03C99, 14H42, 30C15, 32A60, 33E05}
\begin{document}

\begin{abstract}
  \noindent Recently Brownawell and the second author proved a ``non-degenerate'' case of the (unproved) ``Zilber Nullstellensatz'' in connexion with ``Strong Exponential Closure''. Here we treat some significant new cases. In particular these settle completely the problem of solving polynomial-exponential equations in two complex variables. The methods of proof are also new, as is the consequence, for example, that there are infinitely many complex $z$ with $e^z+e^{1/z}=1$.
\end{abstract}

\maketitle

\section{Introduction}\label{intro}

In a recent paper \cite{BM2017} Brownawell and the second author proved a result in connexion with Zilber's ``Strong Exponential Closure Axiom'' for ``pseudoexponential fields''. Over $\mathbf{C}$, this axiom becomes a conjecture, and it is formulated for irreducible algebraic varieties $\mathcal{V}$ in $\mathbf{C}^n \times \mathbf{C}^{*n}$. Its non-strong form, mentioned by Zilber \cite[Corollary 4.5, p.\ 83]{Zil2005} in connexion with $\mathbf{C}_{\rm exp}$ (see also Bays and Kirby \cite[pp.\ 495 and 538, 539]{BK2018} on ``exponential-algebraic closure''), states that if $\mathcal{V}$ is ``normal'' (or ex-normal) and ``free'' (see later for discussions of these concepts) then $\mathcal{V}$ has a point of the shape
\begin{equation}\label{SZ}
  (X_1,\ldots,X_n,\hat X_1,\ldots,\hat X_n)\ =\ (z_1,\ldots,z_n,e^{z_1},\ldots,e^{z_n}).
\end{equation}
This shape makes evident the connexion with Schanuel's Conjecture (see \cite{Lan1966} and Zilber \cite{Zil2002} for much more, as well as the work \cite{Man2016} of the first author and also the more recent papers \cite{DFT2018,DFT2021} of D'Aquino, Fornasiero and Terzo).

Proposition~2 of \cite[p.\ 448]{BM2017} proves the existence of \eqref{SZ} with these two hypotheses on $\mathcal{V}$ replaced by a single one. Denote by $\pi$ the projection from $\mathbf{C}^n \times \mathbf{C}^{*n}$ to $\mathbf{C}^n$. Then if (the Zariski closure) of $\pi(\mathcal{V})$ has dimension $n$, there is always such a point \eqref{SZ}. This single hypothesis implies that $\mathcal{V}$ is normal (but not that $\mathcal{V}$ is free). We remark that the Proposition as stated in \cite{BM2017} appears to require that $\mathcal{V}$ has dimension $n$; however if the dimension is $n'>n$ then life just gets simpler and we can adjoin $n'-n$ suitably chosen equations $\hat X_i=1$. For $n=1$ the Proposition seems to be reasonably well-known; see for example Marker \cite{Mar2006} (or Henson and Rubel \cite{HR1984} earlier). But already for $n=2$ it was new.

Our main purpose in this paper is to relax the condition that $\dim \pi(\mathcal{V}) = n$. We may note that if this dimension is 0, then $\pi(\mathcal{V})$ is a single point $(z_1,\ldots,z_n)$ and now the existence of \eqref{SZ} is obvious, provided $\mathcal{V}$ itself has dimension at least $n$.

Our main result concerns the case $\dim \pi(\mathcal{V})=1$. Here too it is reasonable to assume that $\mathcal{V}$ has dimension at least $n$: for example, it is not difficult to prove that if $\mathcal{L}$ is a line in $\mathbf{C}^n$ and $\mathcal{K}$ is a translate of a group subvariety of $\mathbf{G}_{\rm m}^n=\mathbf{C}^{*n}$ of dimension $n-2$, both generic in a perfectly explicit sense, then $\mathcal{L} \times \mathcal{K}$ contains no point \eqref{SZ}.

However there are more subtle obstructions to solvability. A simple example is $\mathcal{V}$ defined in $\mathbf{C}^2 \times \mathbf{C}^{*2}$ by
\begin{equation}\label{NZ}
  X_1+X_2=1, \quad \hat X_1\hat X_2=1.
\end{equation}
because if $z_1+z_2=1$ then $e^{z_1}e^{z_2}=e \neq 1$. There are similar examples with $X_1+X_2=1$ replaced by
\begin{equation}\label{LRS}
  m_1X_1+m_2X_2=c
\end{equation}
for any complex $c$ and integers $m_1,m_2$ not both zero. One can build analogous examples in $\mathbf{C}^n \times \mathbf{C}^{*n}$ where $\pi(\mathcal{V})$ is contained in the hyperplane defined by
\begin{equation}\label{AF}
  m_1X_1+\cdots +m_nX_n=c.
\end{equation}
It turns out that this is essentially the only obstruction under our new assumption. Thus we shall prove the following result.

\begin{theorem}\label{thm1}
  Suppose $\mathcal{V}$ is an irreducible algebraic variety in $\mathbf{C}^n \times \mathbf{C}^{*n}$, of dimension at least $n$, such that the Zariski closure of $\pi(\mathcal{V})$ in $\mathbf{C}^n$ has dimension 1. If \eqref{AF} does not hold on $\mathcal{V}$ with any complex $c$ and any integers $m_1,\ldots,m_n$ not all zero, then $\mathcal{V}$ contains a point \eqref{SZ}.
\end{theorem}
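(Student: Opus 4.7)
The plan is to set up a single complex variable on a normalisation of the projecting curve and analyse a holomorphic exponential sum. First, reduce to the case $\dim \mathcal{V} = n$ using the remark from the introduction: adjoining suitably chosen equations of the form $\hat X_i = 1$ drops the dimension without creating a new linear relation \eqref{AF} on the resulting variety (since the curve $\pi(\mathcal{V})$ itself is unchanged, and the hypothesis is entirely about the $X_i$'s). Let $\mathcal{C}$ be the Zariski closure of $\pi(\mathcal{V})$, an irreducible curve in $\mathbf{C}^n$, and let $\sigma\colon S \to \mathcal{C}$ be its normalisation; set $\varphi_i = X_i \circ \sigma$, so that $\varphi = (\varphi_1, \ldots, \varphi_n)\colon S \to \mathbf{C}^n$ is a regular parametrisation whose components extend meromorphically to the smooth compactification $\bar S$, with poles only at the finitely many punctures $\bar S \setminus S$. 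Over a non-empty Zariski-open subset of $S$ the fibre of $\pi$ restricted to $\mathcal{V}$ is an irreducible hypersurface of $\mathbf{C}^{*n}$, defined by a single Laurent polynomial $P(t, Y_1, \ldots, Y_n)$ with coefficients that are algebraic functions of $t$.

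Next, introduce the holomorphic test function
\begin{equation*}
  F(t) \ = \ P\bigl(t, e^{\varphi_1(t)}, \ldots, e^{\varphi_n(t)}\bigr),
\end{equation*}
defined on the open subset of $S$ where $P$ has no pole. A zero $t_1$ of $F$ immediately yields the point $(\varphi(t_1), e^{\varphi(t_1)}) \in \mathcal{V}$ of the required shape \eqref{SZ}. The hypothesis that \eqref{AF} fails translates, via $\sigma$, to the statement that no nontrivial $\mathbf{Z}$-linear combination of $\varphi_1, \ldots, \varphi_n$ is constant on $S$. Ax's functional Schanuel theorem then gives
\begin{equation*}
  \operatorname{trdeg}_{\mathbf{C}} \mathbf{C}(\varphi_1, \ldots, \varphi_n, e^{\varphi_1}, \ldots, e^{\varphi_n}) \ \geq \ n + 1,
\end{equation*}
and since $\operatorname{trdeg}_{\mathbf{C}} \mathbf{C}(\varphi_1, \ldots, \varphi_n) = 1$, the exponentials $e^{\varphi_i}$ are algebraically independent over the field of algebraic functions on $S$. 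Consequently $F \not\equiv 0$.

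The crux is to exhibit an actual zero of $F$. Expand $F(t) = \sum_{\alpha \in A} p_\alpha(t) e^{\alpha \cdot \varphi(t)}$ over a finite set $A \subset \mathbf{Z}_{\geq 0}^n$. Pick a puncture $t_0 \in \bar S \setminus S$ at which at least one $\varphi_i$ has a genuine pole; such a puncture exists because $S$ is not projective, and the hypothesis prevents the singular parts of the $\alpha \cdot \varphi$ from coinciding. In a punctured neighbourhood of $t_0$ the function $F$ thus has an essential singularity. In a sector where the real parts $\Re(\alpha \cdot \varphi(t))$ are totally ordered, a single exponential $p_{\alpha^*}(t) e^{\alpha^*\cdot \varphi(t)}$ dominates all others; factoring it out, and choosing loops about $t_0$ along which $\Im(\alpha^* \cdot \varphi)$ sweeps through large intervals, one forces $F$ to have large winding number about $0$ and hence to vanish by the argument principle, or equivalently by Rouché's theorem applied to the dominant term versus the remainder. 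As a reserve, Great Picard applied at $t_0$ ensures that $F$ omits at most one value in any punctured neighbourhood of $t_0$, and the algebraic independence of the $e^{\varphi_i}$ over $\mathbf{C}(\varphi)$ should preclude the omitted value from being $0$.

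The main obstacle I expect is the zero-finding step: at a puncture $t_0$ several of the $\varphi_i$ may blow up simultaneously, so the terms $p_\alpha e^{\alpha\cdot \varphi}$ compete for dominance in different real-analytic sectors, and one must dissect a small punctured neighbourhood of $t_0$ — effectively by a Newton-polygon or tropicalisation analysis of the exponents $\alpha \cdot \varphi$ — then patch the local Rouché arguments across sectors. Handling the case where the curve $\mathcal{C}$ has positive genus, so that $S$ is not simply connected and one cannot appeal to global branches of logarithms, is a further technical nuisance likely addressed by working on suitable sheeted covers near each puncture.
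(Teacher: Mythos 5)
Your setup (reduction to $\dim\mathcal{V}=n$, a single defining equation over the curve, passage to a smooth model, and Ax to get $F\not\equiv 0$) matches the paper's. But the step you label ``the crux'' --- actually producing a zero of $F$ --- is where the entire content of the theorem lies, and neither of your two arguments for it closes. The Picard fallback is simply invalid: the Great Picard Theorem allows $F$ to omit exactly one value near the essential singularity, and nothing stops that value from being $0$ (compare $e^{1/t}$ at $t=0$); algebraic independence of the $e^{\varphi_i}$ over $\mathbf{C}(\varphi)$ only yields $F\not\equiv 0$, and gives no information about which value, if any, is omitted. The Rouch\'e/winding argument is not carried out: on a loop around the puncture the quantities $\Re(\alpha\cdot\varphi)$ oscillate and change sign, so the dominant term switches from sector to sector; worse, at your chosen puncture some differences $(\alpha-\alpha')\cdot\varphi$ may be regular (the hypothesis only forbids them from being \emph{constant} on all of $S$), so whole clusters of terms are mutually comparable and no single exponential dominates in any sector. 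You acknowledge this as ``the main obstacle'', but it is not a patchable technicality --- without it you have no lower bound on the winding number and hence no zero. (You also never verify that at least two monomials of $P$ survive restriction to the curve, which any two-competing-exponentials argument needs.)

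The paper avoids all of this by turning the problem around: \emph{assume} $\Phi$ has no zero on the punctured curve $\mathcal{C}_S$. Then every period of $\de\Phi/\Phi$ lies in $2\pi i\mathbf{Z}$, and by the non-degenerate pairing between $H_1(\mathcal{C}_S)$ and the algebraic de Rham cohomology $\Delta_S$ (Lemmas~\ref{lem:delta-s}--\ref{lem:rational-diffs}) one can write $\de\Phi/\Phi=\delta+\de\phi$ with $\delta$ a \emph{rational} differential regular on $\mathcal{C}_S$ and $\phi$ analytic there, giving the factorisation $\Phi=c_0\exp(\int\delta)\,e^{\phi}$. A punctured Borel--Carath\'eodory estimate (Lemma~\ref{lem3}) upgrades the bound $\Re\phi\leq c|z|^{-\kappa}$ near each puncture to $|\phi|\leq c'|z|^{-\kappa-1}$, so $\phi$ is meromorphic at the punctures and hence rational on $\mathcal{C}$. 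This yields a first-order equation $D\Phi=\chi\Phi$ with $\chi$ in the function field, and feeding the monomial expansion of $\Phi$ into Ax--Lindemann forces all but one coefficient to vanish on the curve, contradicting the choice of $F$ in Lemma~\ref{lem1}. That factorisation-plus-Ax contradiction is the missing idea; without it, or a genuinely completed asymptotic sector analysis, your proof does not go through.
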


For $n=1$ this reduces again to the result in \cite{Mar2006}; but already for $n=2$ it is again new.

Note that the absence of relations \eqref{AF} is what Zilber calls ``free of additive dependencies (over $\mathbf{C}$)'' \cite[p.\ 74]{Zil2005}. When an obstruction \eqref{AF} does arise, one can recover further information inductively by means of ``back-substitution''.

For example in $\mathbf{C}^2 \times \mathbf{C}^{*2}$ with $\mathcal{V}$ defined by
\begin{equation}\label{EX}
  X_1+X_2=1, \quad X_1X_2=\hat X_1+\hat X_2
\end{equation}
any point \eqref{SZ} on $\mathcal{V}$ must lie on $\hat X_1\hat X_2 = e$, thus we can eliminate $X_2,\hat X_2$ to get down to
\begin{equation}\label{EXX}
  \hat X_1^2+e=X_1\hat X_1(1-X_1)
\end{equation}
in $\mathbf{C} \times \mathbf{C}^{*}$.

In Theorem~\ref{thm1} we focus on the existence of a single point \eqref{SZ}, but our proof gives rather more, as was the case in \cite{BM2017}. From the viewpoint of exponential polynomials it is natural to consider just the projections $(z_1,\ldots,z_n)$, although we will mention another possibility later. In \cite{BM2017} we remarked that the set of all such projections is not only infinite but even Zariski dense in $\mathbf{C}^n$ (which is $\pi(\mathcal{V})$ there). This was proved by a simple trick (see later); unfortunately that does not work in our situation if $n>1$. Nevertheless our method of proof shows that the set of $(z_1,\ldots,z_n)$ is indeed infinite and therefore Zariski dense in the curve $\pi(\mathcal{V})$.

In fact for $n=1$ the known results already lead to a fairly explicit description of the set $Z=Z_\mathcal{C}$ of points \eqref{SZ} on an irreducible curve $\mathcal{C}$ in $\mathbf{C} \times \mathbf{C}^*$ based only on the geometry of $\mathcal{C}$. Namely, if $\dim \pi(\mathcal{C})=0$, then $\pi(Z)$ is a single point, while if $\dim \pi(\mathcal{C})=1$, then $\pi(Z)$ is infinite, hence Zariski dense in $\pi(\mathcal{C})$.

Now for $n=2$ we can combine Theorem~\ref{thm1} with back-substitution and \cite{BM2017} to give a conclusive result for $\pi(Z)$ and surfaces $\mathcal{S}$ in $\mathbf{C}^2 \times \mathbf{C}^{*2}$ (we remind the reader that Schanuel's Conjecture itself remains unknown for $n=2$). To state this the following notation will be useful.

If the Zariski closure of $\pi(\mathcal{S})$ is a line $\mathcal{L}$ as in \eqref{LRS}, which we may call a line with rational slope, we write $\mathcal{K}$ for the set of $(e^{z_1},e^{z_2})$ in $\mathbf{C}^{*2}$ with $(z_1,z_2)$ in $\mathcal{L}$. This is an algebraic curve (and even a translate of a group subvariety). Then we write $\mathcal{G = L \times K}$ (still a translate of a group subvariety) and
\begin{equation}\label{GT}
  \mathcal{T  = S \cap G}.
\end{equation}
We will see in Section~\ref{subcases} that if the variety $\mathcal{T}$ is non-empty then it is infinite.

Our result for $n=2$ is as follows.

\begin{theorem}\label{thm2}
  Suppose $\mathcal{S}$ is an irreducible surface in $\mathbf{C}^2 \times \mathbf{C}^{*2}$, and let $Z=Z_\mathcal{S}$ be the set of points \eqref{SZ} in $\mathcal{S}$.
  \begin{enumerate}[label=\textup{(}\alph*\textup{)}]
    \item\label{dim-pi-S-0} If $\dim \pi(\mathcal{S})=0$ then $\pi(Z)$ is a single point, so Zariski dense in $\pi(\mathcal{S})$.
    \item\label{dim-pi-S-2} If $\dim \pi(\mathcal{S})=2$ then $\pi(Z)$ is infinite, and even Zariski dense in $\pi(\mathcal{S})$.
    \item\label{dim-pi-S-1-free} If $\dim \pi(\mathcal{S})=1$ and the Zariski closure of $\pi(\mathcal{S})$ is not a line of rational slope, then $\pi(Z)$ is infinite, so Zariski dense in $\pi(\mathcal{S})$.
    \item\label{dim-pi-S-1-nonfree} If $\dim \pi(\mathcal{S})=1$ and the Zariski closure of $\pi(\mathcal{S})$ is a line of rational slope $\mathcal{L}$, then the following subcases depending on $\mathcal{T}$ in~\eqref{GT} are possible:
          \begin{enumerate}[label=\textup{(}d\textsubscript{\textup{\arabic*}}\textup{)}]
            \item\label{dim-T--1} If $\mathcal{T}$ is empty then $\pi(Z)$ is empty.
            \item\label{dim-T-2} If $\dim \mathcal{T}=2$ then $\pi(Z)$ is infinite, so Zariski dense in $\pi(\mathcal{S})$.
            \item\label{dim-T-1} If $\dim \mathcal{T}=1$ then the following subcases are possible:
                  \begin{enumerate}[label=\textup{(}d\textsubscript{\textup{3\arabic*}}\textup{)}]
                    \item\label{dim-pi-T-0} If $\dim \pi(\mathcal{T})=0$ then $\pi(Z)$ is non-empty and finite, so not Zariski dense in $\pi(\mathcal{S})$.
                    \item\label{dim-pi-T-1} If $\dim \pi(\mathcal{T})=1$ then $\pi(Z)$ is infinite, so Zariski dense in $\pi(\mathcal{S})$.
                  \end{enumerate}
          \end{enumerate}
  \end{enumerate}

\end{theorem}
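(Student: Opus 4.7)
The plan is to dispatch cases (a)--(c) using the non-degenerate theory (Theorem~\ref{thm1} and the result of \cite{BM2017}), and then to treat case (d) by exploiting the forced multiplicative relation among $\hat X_1,\hat X_2$ to reduce matters to the $n=1$ situation recalled in the introduction.

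In (a), $\pi(\mathcal{S})$ is a single point $(z_1,z_2)$, and $\mathcal{S}\subseteq\{(z_1,z_2)\}\times\mathbf{C}^{*2}$ forces equality by irreducibility and dimension, so $(z_1,z_2,e^{z_1},e^{z_2})\in\mathcal{S}$. Case (b) is the content of \cite[Proposition~2]{BM2017} with $n=2$, and the simple trick recalled in the introduction upgrades a single point to a Zariski-dense set. In (c), the Zariski closure of $\pi(\mathcal{S})$ is an irreducible curve that is not a line of rational slope, which is precisely the statement that no relation \eqref{AF} with integers $m_1,m_2$ not both zero holds on $\mathcal{S}$; Theorem~\ref{thm1} with $n=2$ then applies and, as noted in the introduction, yields an infinite and hence Zariski-dense $\pi(Z)\subseteq\pi(\mathcal{S})$.

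For (d), normalise $\mathcal{L}\colon m_1X_1+m_2X_2=c$ with $\gcd(m_1,m_2)=1$. Any point \eqref{SZ} on $\mathcal{S}$ satisfies $\hat X_1^{m_1}\hat X_2^{m_2}=e^{m_1z_1+m_2z_2}=e^c$, so $(\hat X_1,\hat X_2)\in\mathcal{K}$ and $Z\subseteq\mathcal{T}$. Since $\mathcal{S}\subseteq\mathcal{L}\times\mathbf{C}^{*2}$ we have $\mathcal{T}=\mathcal{S}\cap(\mathbf{C}^2\times\mathcal{K})$, and cutting $\mathcal{S}$ with the codimension-$1$ subvariety $\mathbf{C}^2\times\mathcal{K}$ gives $\dim\mathcal{T}\geq 1$ whenever $\mathcal{T}\neq\emptyset$, justifying the trichotomy listed. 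Subcase (d$_1$) is immediate, and in (d$_2$) irreducibility of $\mathcal{S}$ and of $\mathcal{G}=\mathcal{L}\times\mathcal{K}$ forces $\mathcal{S}=\mathcal{T}=\mathcal{G}$, whence every $(z_1,z_2)\in\mathcal{L}$ produces a point \eqref{SZ} of $\mathcal{S}$ and $\pi(Z)=\mathcal{L}$.

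Subcases (d$_3$) share a common parametrization: fix $(z_1^0,z_2^0)\in\mathcal{L}$, so $(e^{z_1^0},e^{z_2^0})\in\mathcal{K}$, and define $\Phi\colon\mathbf{C}\times\mathbf{C}^*\to\mathcal{G}$ by $\Phi(t,s)=(z_1^0+m_2t,\,z_2^0-m_1t,\,e^{z_1^0}s^{m_2},\,e^{z_2^0}s^{-m_1})$; coprimality of $m_1,m_2$ makes this an isomorphism of algebraic varieties. For each irreducible component $\mathcal{T}_0$ of $\mathcal{T}$ let $\tilde{\mathcal{T}}_0=\Phi^{-1}(\mathcal{T}_0)\subseteq\mathbf{C}\times\mathbf{C}^*$. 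A B\'ezout computation shows that the two conditions $\hat X_1=e^{X_1}$ and $\hat X_2=e^{X_2}$ on $\Phi(t,s)$ reduce, via $\gcd(m_1,m_2)=1$, to the single equation $s=e^t$. Thus the $n=1$ statement recalled in the introduction applies to $\tilde{\mathcal{T}}_0$: in (d$_{31}$) the $t$-projection is $0$-dimensional, so $\tilde{\mathcal{T}}_0=\{t_0\}\times\mathbf{C}^*$ and $s=e^{t_0}$ is the unique solution, making $\pi(Z)$ a single point; in (d$_{32}$) the $t$-projection of some component is $1$-dimensional, so there are infinitely many solutions $(t,e^t)\in\tilde{\mathcal{T}}_0$, whose $\Phi$-images project to an infinite, hence Zariski-dense, subset of $\mathcal{L}=\pi(\mathcal{S})$. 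The main technical point to verify is precisely this descent of the Schanuel condition to $s=e^t$ under $\Phi$; once it is in hand, everything else is dimension counting and the $n=1$ case.
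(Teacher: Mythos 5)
Your proposal follows essentially the same route as the paper: (a)--(c) via Theorem~\ref{thm1} and \cite{BM2017}, and (d) via back-substitution along a parametrization of $\mathcal{L}\times\mathcal{K}$ that is exactly the paper's map $\varphi$ from \eqref{LP}, \eqref{KP} with a different base point, reducing to the $n=1$ case by the coprimality argument you describe. The one slip is in the subsubcase $\dim\pi(\mathcal{T})=0$: your per-component argument correctly produces one point of $Z$ for each (necessarily one-dimensional) component of $\mathcal{T}$, hence $\pi(Z)$ non-empty and finite as the theorem asserts, but your concluding claim that $\pi(Z)$ is ``a single point'' is too strong, since $\mathcal{T}$ may have several components with distinct $t_0$ --- the paper gives the example $X_1+X_2=1$, $X_1-X_1^2+\hat X_1\hat X_2=e$, where $\pi(Z)$ has two points.
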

In particular the only situation where there are no points \eqref{SZ} in $\mathcal{S}$ is \ref{dim-T--1} of \ref{dim-pi-S-1-nonfree}; a typical example is \eqref{NZ}, where now $\mathcal{L}$ is defined by $X_1 + X_2 = e$ and $\mathcal{K}$ by $\hat X_1\hat X_2=e$. We stress that all possible cases and subcases may happen.

\subsection*{A proof sketch} The key ideas of our proofs build on those of \cite{Mar2006} for $n=1$. We proceed to recall the arguments there for the example $X_1=\hat X_1$.

To solve the resulting $e^z=z$ we look at the function $\Phi(z)=e^z-z$, an entire function of order at most 1. Using Hadamard's Factorization Theorem as in \cite{Mar2006} or better \cite[XIII\ 3.5]{Lan1999} we find that if $\Phi$ has no zeroes then it is $e^{\phi}$ for $\phi$ entire. A standard application of Borel-Carathéodory (see below) shows that $\phi$ is a polynomial of degree at most 1. So there would be $a,b$ in $\mathbf{C}$ with
\begin{equation}\label{ab}
  e^z-z=e^{az+b}.
\end{equation}
This can be disproved in an elementary way by repeated differentiation, or less elementary using algebraic structure theorems of van den Dries \cite{Dri1984} and of Henson and Rubel \cite{HR1984}; in our situation for general $n$ it suffices to apply a well-known result of Ax \cite{Ax1971} (which in fact is a functional analogue of Schanuel's Conjecture).

Let us examine more closely why our results are new for $n=2$. Consider the example
\[ X_1X_2=1, \quad \hat X_1+\hat X_2=1. \]
Solving for \eqref{SZ} is equivalent to solving the single (non-polynomial-exponential) equation
\begin{equation}\label{a}
  e^z+e^{1 / z}=1
\end{equation}
in complex numbers $z \neq 0$, which does not seem trivial but one sees no obvious obstruction. The basic argument in \cite{BM2017}, finding a fairly obvious approximate solution (like $z_0=200\pi i$ to $e^z=z$), then refining it and then using Newton's Method to home in on an actual solution, seems numerically to give convergence (see also Section~\ref{effectivity}). However for
\begin{equation}\label{b}e^z+e^{z^2}=1
\end{equation}
and $z_0=(200\pi i)^{1/2}=\sqrt{100\pi}(1+i)$ the matter is less clear, and numerically there are hints of the ``chaos'' which is well-known to exist in Newton's Method, with convincing convergence only after 20 iterations. Curiously enough, it works fine for $z_0=-\sqrt{100\pi}(1+i)$.

In fact the method used in \cite{Mar2006} works very well for \eqref{b}: we find instead of \eqref{ab}
\[ e^z+e^{z^2}-1=e^{az^2+bz+c} \]
which can be disproved as before. Our main contribution in this paper is to show that it extends to \eqref{a} and our general situation.

As it stands, this argument fails for \eqref{a}, because we have an essential singularity at $z=0$. Thus we have to restrict to $\mathbf{C}\setminus \{0\}$. But generally $\Phi$ analytic on $\mathbf{C}\setminus \{0\}$ with no zeroes on $\mathbf{C}\setminus \{0\}$ need not be $e^\phi$ for $\phi$ analytic on $\mathbf{C}\setminus \{0\}$. A simple counterexample is $\Phi(z)=z$.

Littlewood \cite[p.\ 392]{GMS1963} said ``it can pay to find out what is the worst enemy of what you want to prove, and then induce him to change sides''. This we do here; the argument to prove $\Phi=e^\phi$ constructs $\phi$ as
\[ \int{\frac{\Phi'(z)}{\Phi(z)}}\de z, \]
and all we have to do is stay inside $\mathbf{C}\setminus \{0\}$ and ensure that the integral along the homology loops containing $z=0$ vanishes, which can be arranged by multiplying $\Phi$ by a power of the ``enemy'' $z$. The upshot for \eqref{a} is that
\begin{equation}\label{HFT}e^z+e^{1/z}-1=z^me^{\phi(z)}
\end{equation}
for some integer $m$ and some $\phi$ analytic on $\mathbf{C}\setminus \{0\}$.

But now we will have to be more careful with Borel-Carathéodory and $z=0$, and in fact it yields only
\begin{equation}\label{lau}
  \phi(z)=az^2+bz+c+{d \over z}+{e \over z^2}
\end{equation}
now with an exponent of $z$ bigger than one might expect and of course of $1/z$ too. Nevertheless still Ax-Lindemann leads to a contradiction.

For examples like
\[ e^z+e^{1/(z^3+z+1)}=1 \]
we have to avoid three points, so three enemies, namely the three factors of $z^3+z+1$, and leading to homology of rank 3.

More generally, we have

\begin{example}\label{ex1}
  Suppose $\Phi$ is analytic on $\mathbf{C} \setminus \{p_1,\ldots,p_{s-1}\}$ and never vanishes there. Then there are integers $m_1,\ldots,m_{s-1}$ such that
  \[ \Phi(z)=(z-p_1)^{m_1}\cdots(z-p_{s-1})^{m_{s-1}}e^{\phi(z)} \]
  for some $\phi$ also analytic on $\mathbf{C} \setminus \{p_1,\ldots,p_{s-1}\}$.
\end{example}
Consider next the surface
\[ X_1^3+X_1+1=X_2^2, \quad \hat X_1+\hat X_2=1 \]
leading to
\[ e^z+e^{\sqrt{z^3+z+1}}=1. \]
As the projection to $\mathbf{C}^2$ is the affine part of an elliptic curve $\mathcal{E}$, we can no longer work with $\mathbf{C}\setminus S$ for a finite set $S$, and this particular problem concerns $\mathcal{E}\setminus \{O\}$ for the origin $O$. In fact the homology is the same as that of $\mathcal{E}$, with rank 2, and so we have to find two enemies. These can be written down explicitly on the universal cover $\mathbf{C}$ of $\mathcal{E}$ in terms of Weierstrass $\wp$ and $\zeta$ functions, or by integrating suitable differentials of the first and second kind on $\mathcal{E}\setminus \{O\}$. They are in fact the very simplest examples of Baker-Akhiezer functions (see~\cite[Ch.\ XIV]{Bak1995} and the foreword by Krichever), although they were known to Weierstrass. For these there seem to be no algebraic structure theorems, but again Ax-Lindemann suffices for a contradiction.

To see this in action, represent $\mathcal{E} \setminus \{O\}$ in the form $y^2 = 4x^3 - g_2 x - g_3$ (a simple change of variables suffices). Recall that $\mathcal{E}$ can be seen as the quotient of $\mathbf{C}$ by a two-dimensional lattice $\Omega$, so we may think of functions on $\mathcal{E} \setminus \{O\}$ as doubly periodic functions on $\mathbf{C} \setminus \Omega$. The associated Weierstrass $\zeta$ has the property that for every $\omega \in \Omega$, there is a quasiperiod $\eta$ such that $\zeta(z + \omega) = \zeta(z) + \eta$. Explicit enemies are then the functions $e^{\omega\zeta(z) - \eta z}$ for any non-zero $\omega$. The outcome is the following analogue of \eqref{HFT}.

\begin{example}\label{ex2}
  Suppose $\Phi$ is doubly periodic with respect to $\Omega$, analytic on $\mathbf{C} \setminus \Omega$ and never vanishes there. Then there is a period $\omega$, with quasi-period $\eta$, such that
  \[ \Phi(z)=e^{\omega\zeta(z)-\eta z}e^{\phi(z)} \]
  for some $\phi$ also doubly periodic with respect to $\Omega$ and analytic on $\mathbf{C} \setminus \Omega$.
\end{example}

Going further, to solve
\[ e^z+e^{1/\sqrt{z^3+z+1}}=1 \]
we have to avoid an additional three points (the zeroes of $z^3 + z + 1$), leading to homology of rank 5 and differentials of the third kind or the Weierstrass sigma function (also Baker-Akhiezer). Here too we get an analogue of \eqref{HFT}. See \eqref{ex3},~\eqref{ex4},~\eqref{ex5} in Section~\ref{examples} for more examples.

Finally consider
\begin{equation}\label{fermat}
  X_1^9+X_2^9=1, \quad \hat X_1+\hat X_2=1
\end{equation}
leading to a curve $\mathcal{C}$ of genus 28 and homology rank 56. To write down the enemies as complex functions (on the covering space) is not so easy without the aid of theta functions (in 28 variables), but on the curve the 56 enemies correspond again to a suitable choice of differentials of the first and second kind (and generally we need the third kind too).

After working out our proofs in terms of these explicitly constructed enemies we realized that there is a more abstract proof based on the canonical isomorphism between algebraic and analytic de~Rham cohomology of complex affine varieties, dating back to Grothendieck \cite{Gro1966}. That paper actually uses Hironaka's resolution of singularities, but we found that this was not needed in our situation (see Section~\ref{periods} for more details). So in the end we were able to avoid any appeal to \cite{Gro1966} by using instead suitable differentials on the underlying curve. This is the proof that we present here; nevertheless we do give an account of the original more explicit constructions, also because these seem to be helpful in obtaining effective versions of our results in which, for example, the zeroes can be localized.

\subsection*{Further remarks} In \cite{BM2017}, with the points \eqref{SZ} on $\mathcal{V}$ projecting to a Zariski dense subset of $\mathbf{C}^n$, we noted that this holds even in a strong sense of being ``relatively near'' to any one of ``sufficiently many'' points on $(2\pi i\mathbf{Z})^n$. It would be interesting to obtain similar strengthenings in our present set-up.

It is also natural to consider the distribution of the unprojected points \eqref{SZ}. In the situation of \cite{BM2017} the trick extends at once to show that they are Zariski dense in $\mathcal{V}$ itself. For if $G$ in $\mathbf{C}[X_1,\ldots,X_n,\hat X_1,\ldots,\hat X_n]$ does not vanish on $\mathcal{V}$, we may apply \cite{BM2017} to the variety in $\mathbf{C}^{n+1} \times \mathbf{C}^{*n+1}$ defined by the equations of $\mathcal{V}$ together with $G=\hat X_{n+1}$.

In our situation the analogous statement is unclear, even for $n=2$. This is illustrated by case \ref{dim-pi-S-1-free} in Theorem~\ref{thm2}. Just for the example \eqref{fermat} the density in $\mathcal{S}$ would amount to the fact that there is no $G \neq 0$ in $\mathbf{C}[X_1,\hat X_1]$ such that $G(z,e^z)=0$ for all $z$ with $e^z+e^{\root 9 \of {1-z^9}}=1$, which does not seem obvious.

In fact this case \ref{dim-pi-S-1-free} is the only problem, as will be established during the proof.

It would also be interesting to extend the investigations to $\pi(\mathcal{V})$ of other dimensions. The simplest case of dimension 2 in $\mathbf{C}^3 \times \mathbf{C}^{*3}$ leads to systems of equations such as
\[ e^z+e^{z^2-w^2}=z, \quad e^w+e^{z^2-w^2}=-w. \]

But it may be more difficult to find corresponding extensions of Theorem~\ref{thm2}. This is because of the obstructions coming from the concept of ``normal'' (see \cite[p.\ 75]{Zil2005}). For $\mathbf{C}^n \times \mathbf{C}^{*n}$ it amounts to the following. For $k=1,\ldots,n$ and a matrix of $k$ independent rows and $n$ columns with integer entries $m_{ij}$ we define a map $\mu$ from $\mathbf{C}^n \times \mathbf{C}^{*n}$ to $\mathbf{C}^k \times \mathbf{C}^{*k}$ by
\[ \mu(X_1,\ldots,X_n,\hat X_1,\ldots,\hat X_n)=\left(\sum_{j=1}^nm_{1j}X_j,\ldots,\sum_{j=1}^nm_{kj}X_j,\prod_{j=1}^n\hat X_j^{m_{1j}},\ldots,\prod_{j=1}^n\hat X_j^{m_{kj}}\right). \]
Then one imposes the condition that $\dim\mu(\mathcal{V}) \geq k$ for all $k$ and $\mu$.

If this condition fails then some $\mu(\mathcal{V})$ may be too small to contain the analogues of points \eqref{SZ}.

Note that normality is not a necessary condition; when $n=2$ it fails for the example
\[ X_1+X_2=1, \qquad \hat X_1\hat X_2=e \]
with $k=1$ and $m_{11}=m_{12}=1$ whereas here $Z$ is clearly infinite. We note that this surface is not ``free'' (see \cite[pp.\ 74--75]{Zil2005}); in fact, it turns out that for $n=2$, ``freeness'' and ``normality'' are equivalent.

We briefly mention some aspects of decidability and effectivity in our Theorem~\ref{thm1}.

Already in Theorem~\ref{thm2} for $n=2$ it may not be possible to decide for a given $\mathcal{S}$ which of the various possibilities actually arises. Say the defining equations are just $X_1=1$, $\hat{X}_1=\theta$ for some $\theta$ in ${\bf C}$, so we are in \ref{dim-pi-S-1-nonfree}; we find that we are in \ref{dim-T--1} or \ref{dim-T-2} according to whether $e \neq \theta$ or $e = \theta$. Now if $\theta$ is an explicitly given element of $\overline{{\bf Q}(\pi)}$, this may not be so easy, as for example with
\[ \sqrt[6]{\pi^5+\pi^4} = 2.7182818086\ldots; \]
and in general it involves Schanuel's Conjecture of course (see \cite[p.\ 31]{Lan1966}).

Furthermore for three-folds in $\mathbf{C}^3 \times \mathbf{C}^{*3}$ another obstacle arises. Take any absolutely irreducible polynomial $P$ in two variables over $\mathbf{Q}$, and $\mathcal{V}$ defined by $P(X_1/(2\pi i), X_2/(2\pi i)) = 0$ together with $\hat{X}_1=1$, $\hat{X}_2=1$. Then the points \eqref{SZ} correspond exactly to the integral solutions of $P(x_1, x_2) = 0$. If the genus here is at least 2 then we know no algorithm for finding these, as for example with
\[ x_1^4-2x_2^4+x_1x_2+x_1-n=0, \]
and in general it involves Hilbert's Tenth Problem (see for example \cite{Dav1973}).

We note also that just to decide if a variety is normal involves Zilber-Pink matters (specifically the conjectures of intersection with tori or cosets as considered in \cite{Zil2002,Zil2005} for example).

Nevertheless we shall give some simple effectivity arguments for special cases of Theorem~\ref{thm1} such as \eqref{a}.

\subsection*{Structure of the paper} The rest of our paper is arranged as follows.

In Section~\ref{prelim} we record some preliminary observations towards the proof of Theorem~\ref{thm1}, including the ``punctured'' version of Borel-Carathéodory which avoids $z=0$ and also the version of Ax's Theorem that we need.

Then in Section~\ref{periods} we construct suitable differentials on the underlying curve $\pi(\mathcal{V})$ and deduce our special form of the Grothendieck result.

The proof of Theorem~\ref{thm1} follows in Section~\ref{main-proof}, and that of Theorem~\ref{thm2} in Section~\ref{subcases}.

Then in Section~\ref{examples} we present the explicit versions of \eqref{HFT} for small genus, and finally in Section~\ref{effectivity} we briefly explain about effectivity and Theorem~\ref{thm1}.

We are grateful to David Grant for his help in connexion with the genus 2 constructions in Section~\ref{examples}.

\section{Preliminaries}\label{prelim}
From now on, $\mathcal{V}$ will be as in Theorem~\ref{thm1}. When $\dim \mathcal{V}=n$ it will be important to know that $\mathcal{V}$ is defined, apart from the equations defining the curve $\pi(\mathcal{V})$, by a single additional equation.

\begin{lemma}\label{lem1}
  Suppose $\mathcal{V}$ is an irreducible algebraic variety in $\mathbf{C}^n \times \mathbf{C}^{*n}$ of dimension $n$ such that the Zariski closure of $\pi(\mathcal{V})$ in $\mathbf{C}^n$ is a curve $\mathcal{C}_0$. Let $\mathfrak{P}_0$ be the prime ideal of $\mathcal{C}_0$ in $\mathfrak{R}_0=\mathbf{C}[X_1,\ldots,X_n]$. Similarly, write $\mathfrak{P}$ for the prime ideal of $\mathcal{V}$ in $\mathfrak{R} = \mathbf{C}[X_1,\ldots,X_n,\hat X_1,\ldots,\hat X_n]$.

  Then there are $F$ in $\mathfrak{P}$, not in $M\mathfrak{R}_0 + \mathfrak{P}_0 \mathfrak{R}$ for any monomial $M$ in $\hat X_1,\ldots,\hat X_n$, and $G_0$ in $\mathfrak{R}_0$, not in $\mathfrak{P}_0$, such that $\mathfrak{P}$ is contained in $G_0^{-1}(F\mathfrak{R} + \mathfrak{P}_0\mathfrak{R})$. Furthermore if $\mathcal{C}_0$ is a line then we can take $G_0=1$; in particular, if $n=2$ and $\mathcal{C}_0$ is defined by the vanishing of a polynomial $F_0$ of degree $1$, then $\mathfrak{P} = F\mathfrak{R} + F_0\mathfrak{R}$.
\end{lemma}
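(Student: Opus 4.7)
The plan is to work modulo $\mathfrak{P}_0\mathfrak{R}$ and exploit the fact that after inverting enough elements of $\mathfrak{R}_0$, the resulting ring becomes a polynomial ring over a field, hence a UFD in which height-one primes are principal. Concretely, set $A=\mathfrak{R}_0/\mathfrak{P}_0$ and $K_0=\operatorname{Frac}(A)$. First I would verify that $\mathfrak{P}\cap\mathfrak{R}_0=\mathfrak{P}_0$: any polynomial in $X_1,\ldots,X_n$ which vanishes on $\mathcal{V}$ also vanishes on $\pi(\mathcal{V})$, hence on $\mathcal{C}_0$. Consequently $\bar{\mathfrak{P}}:=\mathfrak{P}/\mathfrak{P}_0\mathfrak{R}$ is a prime ideal of $A[\hat X_1,\ldots,\hat X_n]$ disjoint from the multiplicative set $S=A\setminus\{0\}$. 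Since $\dim\mathcal{V}=n$ and $\dim\mathcal{C}_0=1$, the quotient $A[\hat X]/\bar{\mathfrak{P}}$ has Krull dimension $n$, so $\bar{\mathfrak{P}}$ has height $1$ in $A[\hat X]$ (which has dimension $n+1$). Localising, $S^{-1}\bar{\mathfrak{P}}$ is a height-one prime in the UFD $K_0[\hat X_1,\ldots,\hat X_n]$, and is therefore principal, generated by some irreducible $\tilde F\in K_0[\hat X]$.

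Clearing denominators turns $\tilde F$ into an element $F'\in A[\hat X]$ which still generates $S^{-1}\bar{\mathfrak{P}}$; since $S\cap\bar{\mathfrak{P}}=\emptyset$ one has $A[\hat X]\cap S^{-1}\bar{\mathfrak{P}}=\bar{\mathfrak{P}}$, so $F'\in\bar{\mathfrak{P}}$. Any lift $F\in\mathfrak{R}$ of $F'$ then lies in $\mathfrak{P}$. For any $H\in\mathfrak{P}$ the image $\bar H$ is a multiple of $\tilde F$ in $K_0[\hat X]$, so after further denominator-clearing there exists $g_H\in S$ with $g_H\bar H\in F'\,A[\hat X]$, i.e.\ $g_H H\in F\mathfrak{R}+\mathfrak{P}_0\mathfrak{R}$. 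By the Hilbert basis theorem $\mathfrak{P}$ is finitely generated, so taking $G_0$ to be the product of the finitely many $g_H$ attached to a generating set (and then lifting $G_0$ to an element of $\mathfrak{R}_0\setminus\mathfrak{P}_0$) yields $\mathfrak{P}\subseteq G_0^{-1}(F\mathfrak{R}+\mathfrak{P}_0\mathfrak{R})$.

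The main subtlety, and the key hypothesis to verify, is that $F$ is not of the form $M\mathfrak{R}_0+\mathfrak{P}_0\mathfrak{R}$. Suppose to the contrary that $F'=Ma$ in $A[\hat X]$ for some monomial $M$ in the $\hat X_j$ and $a\in A$. If $a=0$, then $F\in\mathfrak{P}_0\mathfrak{R}$ and $\tilde F=0$, forcing $\bar{\mathfrak{P}}=0$, i.e.\ $\mathcal{V}=\mathcal{C}_0\times\mathbf{C}^{*n}$, which has dimension $n+1$, contradicting $\dim\mathcal{V}=n$. If $a\neq 0$, then $a/1$ is a unit in $K_0$, so $\tilde F$ and $M$ are associates in $K_0[\hat X]$; hence $M\in S^{-1}\bar{\mathfrak{P}}\cap A[\hat X]=\bar{\mathfrak{P}}$, so $M\in\mathfrak{P}$. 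But each $\hat X_j$ is invertible on $\mathbf{C}^n\times\mathbf{C}^{*n}$, so no nontrivial monomial in the $\hat X_j$ vanishes on $\mathcal{V}$, a contradiction.

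Finally I would handle the linear case. If $\mathcal{C}_0$ is a line, then $A\cong\mathbf{C}[t]$, so $A[\hat X_1,\ldots,\hat X_n]\cong\mathbf{C}[t,\hat X_1,\ldots,\hat X_n]$ is itself a UFD, and the height-one prime $\bar{\mathfrak{P}}$ is already principal before localising. Choosing $F'$ to be a generator, the argument above gives $\mathfrak{P}\subseteq F\mathfrak{R}+\mathfrak{P}_0\mathfrak{R}$ with $G_0=1$. For the last assertion, when $n=2$ and $\mathcal{C}_0$ is a line defined by a degree-one polynomial $F_0$, the ideal $\mathfrak{P}_0$ coincides with $F_0\mathfrak{R}_0$ since a line in $\mathbf{C}^2$ is cut out set-theoretically and schematically by $F_0$; combining this with the previous inclusion yields $\mathfrak{P}\subseteq F\mathfrak{R}+F_0\mathfrak{R}$, and the reverse inclusion is immediate from $F\in\mathfrak{P}$ and $F_0\in\mathfrak{P}_0\subseteq\mathfrak{P}$. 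The potentially tricky point — and thus where I would focus the write-up — is the non-monomial clause, since it rests on combining the dimension computation with the torus-invertibility of the $\hat X_j$.
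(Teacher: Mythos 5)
Your proposal is correct and follows essentially the same route as the paper's proof: pass to the quotient $A[\hat X_1,\ldots,\hat X_n]=\mathfrak{R}/\mathfrak{P}_0\mathfrak{R}$, use that the image of $\mathfrak{P}$ is a height-one prime becoming principal in the UFD $K_0[\hat X_1,\ldots,\hat X_n]$, clear denominators over a finite generating set, exclude monomial generators via the invertibility of the $\hat X_j$ on $\mathbf{C}^{*n}$, and note that when $\mathcal{C}_0$ is a line $A[\hat X_1,\ldots,\hat X_n]$ is already a UFD. The only cosmetic difference is that you compute the height by Krull dimension rather than transcendence degree, which is equivalent here.
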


\begin{proof}
  Let $x_1,\ldots,x_n$ denote the coordinate functions on $\mathcal{C}_0$, and call $\mathfrak{r}_0$ its coordinate ring $\mathbf{C}[x_1,\ldots,x_n]$. By considering elements of $\mathfrak{R}$ as polynomials in  $\hat X_1,\dots,\hat X_n$ we see that the specialization $\sigma$ from $\mathfrak{R}$ to $\mathfrak{r}_0[\hat X_1,\ldots,\hat X_n]$ has kernel $\mathfrak{P}_0 \mathfrak{R}$ (which lies in $\mathfrak{P}$). We now pass into $K_0[\hat X_1,\ldots, \hat X_n]$ with the quotient field $K_0=\mathbf{C}(x_1,\ldots,x_n)$ of $\mathfrak{r}_0$. We claim that $\sigma(\mathfrak{P})$ is prime in $\mathfrak{r}_0[\hat X_1,\ldots,\hat X_n]$, and then by clearing denominators we see that $K_0 \sigma(\mathfrak{P})$ is prime in $K_0[\hat X_1,\dots,\hat X_n]$ too.

  Indeed, suppose $P_1$, $P_2$ are in $\mathfrak{r}_0[\hat X_1,\ldots,\hat X_n]$ with $P_1P_2$ in $\sigma(\mathfrak{P})$. Clearly $P_1=\sigma(Q_1)$, $P_2=\sigma(Q_2)$ with $Q_1$, $Q_2$ in $\mathfrak{R}$, and $P_1P_2=\sigma(Q)$ for $Q$ in $\mathfrak{P}$. Thus $Q_1Q_2-Q$ is in the kernel $\mathfrak{P}_0 \mathfrak{R}$ so in $\mathfrak{P}$. So also $Q_1Q_2$ is in $\mathfrak{P}$. If $Q_1$ is in $\mathfrak{P}$ then $P_1$ is in $\sigma(\mathfrak{P})$ and similarly for $P_2$; this gives the above claim.

  Now $\mathfrak{R}/\mathfrak{P}$ has transcendence degree $n$ over $\bf C$, and it has a subfield $\mathfrak{R}_0/\mathfrak{P}_0$ of transcendence degree $1$ over $\bf C$; thus $\mathfrak{R}/\mathfrak{P}$ has transcendence degree $n-1$ over $\mathfrak{R}_0/\mathfrak{P}_0$. It follows (see for example \cite[p.\ 91]{ZS1960}) that $K_0 \sigma(\mathfrak{P})$ is minimal in the sense of \cite[p.\ 238]{ZS1958}. This latter reference (``Principal Ideal Theorem'') shows that $K_0 \sigma(\mathfrak{P})$ is principal, as $K_0[\hat X_1,\ldots,\hat X_n]$ is a unique factorization domain. We can further assume the generator is in $\sigma(\mathfrak{P})$, so it is $\sigma(F)$ for some $F$ in $\mathfrak{P}$. If $F$ were in $M\mathfrak{R}_0 + \mathfrak{P}_0 \mathfrak{R}$ for some monomial $M$ as above then $\sigma(F)=\sigma(M)f$ for some non-zero $f$ in $\mathfrak{r}_0$, so we could have taken the generator as $M$; however that would imply that the projection of $\mathcal{V}$ to $\mathbf{C}^{*n}$ is empty, which is certainly not the case.

  Finally for each $A$ in $\mathfrak{P}$ there is $h_A$ in $K_0[\hat X_1,\ldots,\hat X_n]$ with $\sigma(A)=h_A\sigma(F)$ and there is $g_A \neq 0$ in $\mathfrak{r}_0$ with $g_Ah_A$ in $\mathfrak{r}_0[\hat X_1,\ldots,\hat X_n]$. In particular $g_A=\sigma(G_A)$ for some $G_A$ in $\mathfrak{R}_0$, and $g_Ah_A= \sigma(B_A)$ for some $B_A$ in $\mathfrak{R}$. Hence $A$ is in $G_A^{-1}(F\mathfrak{R} + \mathfrak{P}_0\mathfrak{R})$ and the result follows on taking a finite basis for $\mathfrak{P}$.

  If $\mathcal{C}_0$ is a line then $\mathfrak{r}_0$ is isomorphic to some $\mathbf{C}[x]$, thus a unique factorization domain. Then $\mathbf{C}[x][\hat X_1,\ldots,\hat X_n]$ is a unique factorization domain, so $\sigma(\mathfrak{P})$ is principal and we may assume $\sigma(F)$ to be its generator, in which case we find $\mathfrak{P} = F\mathfrak{R} + \mathfrak{P}_0\mathfrak{R}$, or in other words, we may always take $g_A = 1$. In the special case $n=2$ with $\mathfrak{P}_0$ generated by a polynomial $F_0$ of degree $1$, we find $\mathfrak{P} = F\mathfrak{R} + F_0\mathfrak{R}$.

  This completes the proof.
\end{proof}
Thus to find a point \eqref{SZ} on $\mathcal{V}$ it suffices to solve $F(z_1,\ldots,z_n,e^{z_1},\ldots,e^{z_n})=0$ with $(z_1,\ldots,z_n)$ on $\mathcal{C}_0$ but $G_0(z_1,\ldots,z_n) \neq 0$.

Next we recall a standard version of Borel-Carathéodory which estimates the absolute value $|\phi_0|$ in terms of the real part $\Re \phi_0$.

\begin{lemma}\label{lem2}
  For $0 \leq r<R$ and any $\phi_0$ analytic on the disc $|w| \leq R$ we have
  \[ \sup_{|w|\leq r}|\phi_0(w)|\ \leq \ {2r \over R-r}\sup_{|w|\leq R}\Re \phi_0(w) +{R+r \over R-r}|\phi_0(0)|. \]
\end{lemma}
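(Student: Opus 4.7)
The plan is to prove this classical Borel-Carathéodory estimate by the standard reduction to Schwarz's lemma, tracking constants carefully to get exactly the stated constants $2r/(R-r)$ and $(R+r)/(R-r)$.

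First, I would reduce to the case $\phi_0(0) = 0$ by applying the result to $\tilde\phi_0(w) = \phi_0(w) - \phi_0(0)$. If in that special case we can establish
\[ \sup_{|w| \leq r} |\tilde\phi_0(w)| \leq \frac{2r}{R-r} \sup_{|w| \leq R} \Re \tilde\phi_0(w), \]
then since $\sup \Re \tilde\phi_0 = \sup \Re \phi_0 - \Re \phi_0(0)$, the triangle inequality $|\phi_0(w)| \leq |\tilde\phi_0(w)| + |\phi_0(0)|$ together with $-\Re \phi_0(0) \leq |\phi_0(0)|$ gives
\[ |\phi_0(w)| \leq \frac{2r}{R-r} \sup_{|w| \leq R} \Re \phi_0(w) + \left(\frac{2r}{R-r} + 1\right)|\phi_0(0)|, \]
and the coefficient in parentheses is exactly $(R+r)/(R-r)$, as required.

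Now assume $\phi_0(0) = 0$ and write $A = \sup_{|w| \leq R} \Re \phi_0(w)$, which satisfies $A \geq \Re \phi_0(0) = 0$. The trivial case $A = 0$ forces $\Re \phi_0 \equiv 0$ on $|w| \leq R$ by the maximum principle applied to $e^{\phi_0}$, hence $\phi_0$ is constant $0$ and the inequality is immediate. Otherwise $A > 0$, and I introduce the auxiliary function
\[ \psi(w) = \frac{\phi_0(w)}{2A - \phi_0(w)}. \]
The denominator never vanishes on $|w| \leq R$ since $\Re(2A - \phi_0(w)) = 2A - \Re \phi_0(w) \geq A > 0$, so $\psi$ is analytic on the closed disc. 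The key inequality $|\psi(w)| \leq 1$ follows from the computation
\[ |2A - \phi_0(w)|^2 - |\phi_0(w)|^2 = (2A - \Re\phi_0(w))^2 - (\Re\phi_0(w))^2 = 4A(A - \Re\phi_0(w)) \geq 0. \]
Since also $\psi(0) = 0$, Schwarz's lemma yields $|\psi(w)| \leq |w|/R$ for $|w| \leq R$.

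Finally I invert the relation as $\phi_0 = 2A\psi/(1+\psi)$, which for $|w| \leq r$ gives
\[ |\phi_0(w)| \leq \frac{2A |\psi(w)|}{1 - |\psi(w)|} \leq \frac{2A \cdot |w|/R}{1 - |w|/R} = \frac{2A|w|}{R - |w|} \leq \frac{2Ar}{R-r}, \]
establishing the reduced inequality and completing the proof. There is no serious obstacle here: the only slightly delicate point is verifying the algebraic identity for $|2A - \phi_0|^2 - |\phi_0|^2$, and handling the degenerate case $A = 0$, both of which are routine.
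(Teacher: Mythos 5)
Your proof is correct. The paper itself gives no argument for this lemma, simply citing Lang's \emph{Complex Analysis} (XII 3.1); your write-up is precisely the standard Schwarz-lemma proof found there, with the reduction to $\phi_0(0)=0$, the Möbius-type auxiliary function $\psi=\phi_0/(2A-\phi_0)$, and the constants tracked correctly (in particular the identity $2r/(R-r)+1=(R+r)/(R-r)$ and the degenerate case $A=0$ are handled properly). Nothing to add.
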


\begin{proof} See for example \cite[XII 3.1]{Lan1999}.
\end{proof}
\bigskip
This is used in the classical theory to deduce from an inequality $\Re \phi_0(w) \leq c|w|^\kappa\ (\kappa>0)$, for $\phi_0$ entire and all $|w|$ large, a similar inequality $|\phi_0(w)| \leq c'|w|^\kappa$. We use it here to obtain the following consequence for functions $\phi$ analytic only near (but not at) a finite point, which we can take as $z=0$.

\begin{lemma}\label{lem3}
  Let $\phi$ be a function analytic on a punctured neighbourhood of $0$ on which
  \[ \Re\phi(z) \leq \frac{c}{|z|^\kappa} \]
  for some real $c,\kappa\geq0$ independent of $z$. Then there is a punctured neighbourhood of $0$ on which
  \[ |\phi(z)| \leq \frac{c'}{|z|^{\kappa+1}} \]
  for some real $c'$ independent of $z$.
\end{lemma}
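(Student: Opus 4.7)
The plan is to apply Lemma~\ref{lem2} (Borel-Carath\'eodory) on a disc that contains the target point $z_0$ but avoids the singularity at $0$, and whose center lies on a fixed circle around $0$ where $|\phi|$ is already under control.

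First I would fix $\rho>0$ small enough that $\phi$ is analytic on $0<|z|\leq 2\rho$ with the hypothesis $\Re\phi(z)\leq c/|z|^\kappa$ holding there, and set $K=\max_{|\zeta|=\rho}|\phi(\zeta)|$, which is finite by continuity of $\phi$ on the compact circle $|\zeta|=\rho$. Given $z_0$ with $0<|z_0|<\rho$, I would place the reference center at $\zeta_0=\rho\,z_0/|z_0|$ (the projection of $z_0$ onto the fixed circle) and consider the disc $|z-\zeta_0|\leq R$ with $R=\rho-|z_0|/2$. For any $z$ in this disc one has $|z_0|/2=|\zeta_0|-R\leq|z|\leq|\zeta_0|+R\leq 2\rho$, so $\phi$ is analytic there and $\Re\phi(z)\leq c/(|z_0|/2)^\kappa = 2^\kappa c/|z_0|^\kappa$.

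Choosing further $r=\rho-3|z_0|/4<R$, one checks $R-r=|z_0|/4$ and $|z_0-\zeta_0|=\rho-|z_0|<r$, so $z_0$ lies inside the inner disc. Applying Lemma~\ref{lem2} to $\phi_0(w)=\phi(\zeta_0+w)$ then gives
\[
  |\phi(z_0)|\ \leq\ \frac{2r}{R-r}\cdot\frac{2^\kappa c}{|z_0|^\kappa}\ +\ \frac{R+r}{R-r}\,K.
\]
Because $2r,\,R+r\leq 2\rho$ while $R-r=|z_0|/4$, both ratios are at most $8\rho/|z_0|$, so the right-hand side is bounded by $8\rho\cdot 2^\kappa c/|z_0|^{\kappa+1}+8\rho K/|z_0|$. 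Restricting to $|z_0|\leq 1$ and using $\kappa\geq 0$ absorbs the second term into the first, yielding $|\phi(z_0)|\leq c'/|z_0|^{\kappa+1}$ on the punctured neighbourhood $0<|z|\leq\min(1,\rho)$.

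The only real subtlety is choosing the geometry so that $R-r$ scales linearly with $|z_0|$: this is what produces the exponent $\kappa+1$ rather than something worse, and it would fail for any fixed ratio $r/R$ independent of $|z_0|$. Otherwise the argument is a direct reduction to Borel-Carath\'eodory once one rotates the center $\zeta_0$ with the direction of $z_0$ so that a uniform bound $K$ on $|\phi(\zeta_0)|$ is available.
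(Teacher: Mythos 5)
Your proposal is correct and follows essentially the same route as the paper's own proof: centre a Borel--Carath\'eodory disc at the radial projection $\rho z_0/|z_0|$ of $z_0$ onto a fixed circle, with $R-r$ proportional to $|z_0|$ so that the inner disc reaches $z_0$ while the outer disc stays in the annulus $|z_0|/2\leq|z|\leq 2\rho$. The only differences are cosmetic (the paper takes $r=\rho-|z_0|$ so that $z_0$ lands exactly on the inner circle, and leaves the absorption of the $K/|z_0|$ term implicit), so there is nothing to add.
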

\begin{proof}
  Let $0 < |z| \leq 2\delta$ be a punctured neighbourhood as in the assumption. The conclusion will be about the neighbourhood $0 < |z| \leq \delta$. Write $c_1=\sup_{|z|=\delta}|\phi(z)|$, and choose any $z_1$ with $0 < |z_1| \leq \delta$. We are going to apply Lemma~\ref{lem2} to carefully chosen $w$-discs with
  \[ w = z - \delta \frac{z_1}{|z_1|}, \]
  so that their centre $w = 0$ lies on $|z| = \delta$. Accordingly define
  \[ \phi_0(w) = \phi\left(w + \delta\frac{z_1}{|z_1|}\right) \]
    and
  \[ r = \delta - |z_1|, \quad R = \delta - \frac{|z_1|}{2}, \]
  so that $0 \leq r < R$. Now $|w| \leq R$ implies
  \[ |z| = \left|w + \delta\frac{z_1}{|z_1|}\right| \leq R + \delta \leq 2\delta \]
  and so the larger $w$-disc $|w| \leq R$ is contained in the larger $z$-disc $|z| \leq 2\delta$.

  Then for
  \[ w_1 = z_1 - \delta\frac{z_1}{|z_1|} = z_1\left(1 - \frac{\delta}{|z_1|}\right) \]
  we have
  \[ |w_1| = |z_1|\left| 1 - \frac{\delta}{|z_1|}\right| = -\left(|z_1| - \delta\right) = r. \]
  and so $w_1$ lies on the smaller $w$-disc $|w| = r$. Thus Lemma~\ref{lem2} implies
   \[ |\phi(z_1)| = |\phi_0(w_1)| \leq \frac{2\delta}{|z_1|/2}\frac{2^\kappa c}{|z_1|^\kappa} + \frac{2\delta}{|z_1|/2} \left|\phi\left(\delta\frac{z_1}{|z_1|}\right)\right| \leq \frac{2^{\kappa+2}\delta c}{|z_1|^{\kappa+1}} + \frac{4\delta c_1}{|z_1|} \]
  and the result follows.
\end{proof}
Finally here is the result of Ax that we shall use.

\begin{lemma}\label{Ax-rank-1}
  In characteristic zero let $K$ be a differential field with derivation $D$ and constant field $C$. Let $\xi_1,\ldots,\xi_n$ be in $K$ and let $\hat \xi_1,\ldots,\hat \xi_n$ be in $K^*$ such that:
  \begin{enumerate}[label=(\arabic*)]
    \item\label{diff-eq} $D\xi_i = D\hat \xi_i/\hat \xi_i \ (i=1,\ldots,n)$;
    \item\label{indep-mod-C} $m_1\xi_1 + \cdots + m_n\xi_n$ is not in $C$ for any $m_1,\ldots,m_n$ in $\mathbf{Z}$ not all zero.
  \end{enumerate}
  \noindent
  Then at least $n+1$ among $\xi_1,\ldots,\xi_n,\hat \xi_1,\ldots,\hat \xi_n$ are algebraically independent over $C$.
\end{lemma}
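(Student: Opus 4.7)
The plan is to read this off directly from Ax's theorem \cite{Ax1971}. First I would match up the hypotheses: our condition~\ref{diff-eq}, $D\xi_i = D\hat\xi_i/\hat\xi_i$, is exactly the differential equation characterising $\hat\xi_i$ as a formal exponential of $\xi_i$; and our condition~\ref{indep-mod-C} is the $\mathbf{Q}$-linear independence of $\xi_1,\ldots,\xi_n$ modulo $C$. These are precisely the hypotheses of Ax's result, whose conclusion $\mathrm{tr.deg}_C\, C(\xi_1,\ldots,\xi_n,\hat\xi_1,\ldots,\hat\xi_n) \geq n+1$ is equivalent to at least $n+1$ of the $2n$ elements being algebraically independent over $C$.

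A small reduction is needed because Ax states his theorem for formal power-series rings rather than for an abstract differential field. I would pass to the differential subfield $K_0 \subseteq K$ generated over $\mathbf{Q}$ by $\xi_1,\ldots,\xi_n$, $\hat\xi_1^{\pm 1},\ldots,\hat\xi_n^{\pm 1}$, and their derivatives, together with $C_0 = K_0 \cap C$. This $K_0$ is finitely generated, so by Seidenberg's embedding theorem (or a direct Puiseux construction over the algebraic closure of $C_0$) it embeds, compatibly with $D$, into a field of formal Laurent series with field of constants $C_0$. Ax's theorem applied in this power-series setting then yields the desired transcendence-degree lower bound, which transfers back to $K$ since algebraic (in)dependence is preserved under field embeddings. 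Alternatively, one can simply invoke one of the subsequent abstract-differential-field restatements of Ax in the literature, which already bypasses this step.

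The main obstacle is essentially cosmetic: identifying the hypotheses and carrying out the routine embedding. No new analytic or geometric input is required beyond Ax's 1971 theorem itself, which is what makes this lemma a convenient black box for the rest of the argument: it replaces any structural classification of polynomial-exponential identities (as in van den Dries, or Henson and Rubel) by a single, uniform transcendence statement, and this is exactly what the proof of Theorem~\ref{thm1} will invoke once a hypothetical identity of the form $\Phi = z^m e^{\phi(z)}$ (or its higher-genus analogue) has been produced.
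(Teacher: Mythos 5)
Your proposal is correct in substance and uses the same key input as the paper: the lemma is read off as a black-box consequence of Ax's 1971 theorem. Two remarks, though. First, the reduction you spend most of your effort on -- passing to a finitely generated differential subfield and embedding it into formal Laurent series via Seidenberg -- is an unnecessary detour: the result the paper cites is Theorem~3 of \cite[p.\ 253]{Ax1971}, which is already stated for an abstract differential field of characteristic zero with derivations $D_1,\ldots,D_m$ and constant field $C$, so one simply specialises to a single derivation. (Your fallback sentence about ``abstract-differential-field restatements in the literature'' is closer to the truth than you seem to realise; the restatement is in Ax's own paper.) Second, and more importantly, you gloss over the one genuine verification: Ax's conclusion there is $\operatorname{tr.deg}_C \geq n + \operatorname{rank}(D_i\xi_j)$, not $n+1$ outright, so you must check that this rank equals $1$. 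That follows from hypothesis \ref{indep-mod-C}: if $D\xi_j = 0$ for all $j$ then each $\xi_j$ would lie in $C$, contradicting \ref{indep-mod-C}. The paper also observes that Ax's side hypothesis that the $\xi_i$ be nonzero (he takes them in $K^*$) is implied by \ref{indep-mod-C} and is in any case superfluous. Neither point invalidates your argument -- your embedding route can be made to work with the care you sketch about constant fields -- but the direct citation is both shorter and what the paper actually does.
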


\begin{proof}
  This follows at once from Theorem~3 of \cite[p.\ 253]{Ax1971} (with a single derivation); note that the rank there is 1 because for example $D\xi_n = 0$ would imply $\xi_n$ in $C$ contradicting \ref{indep-mod-C}. Note also that \ref{indep-mod-C} implies that $\xi_1,\ldots,\xi_n$ are all not in $C$, and in particular are all non-zero, which seems to be an additional assumption of this Theorem~3 (and is superfluous anyway).
\end{proof}

In fact we will not need the full force of \cite{Ax1971}, because we apply it with $\xi_1,\ldots,\xi_n$ in the function field $K_0$ of a curve with $D=\de / \de z$ for some (non-constant) $z$ in $K_0$. Then in fact $\hat\xi_1=e^{\xi_1},\ldots,\hat\xi_n=e^{\xi_n}$ are algebraically independent over $K_0$ (so we are rather with what is known in the trade as ``Ax-Lindemann-Weierstrass''). This could be seen directly by considering a relation $\sum_{m=1}^M\beta_me^{\alpha_m}=0$ for $\beta_1,\ldots,\beta_M$ in $K_0$ and $\alpha_1,\ldots,\alpha_M$ in $K_0$ different modulo $\mathbf{C}$, dividing by the last term, differentiating, using induction on $M$ and finally comparing poles of $D\beta_M/\beta_M$ and $D\alpha_M$.

It is convenient to record the following obvious consequence for $n=1$ (``Ax-Hermite-Lindemann'').

\begin{corollary}\label{Ax-HL}
  In characteristic zero let $K$ be a differential field with derivation $D$ and constant field $C$. Let $\xi$ and $\hat\xi \neq 0$ be in $K$ with $D\xi=D\hat\xi/\hat\xi$ and $\xi$ not in $C$. Then $\xi,\hat\xi$ are algebraically independent over $C$.
\end{corollary}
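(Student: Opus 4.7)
The plan is to derive this as a direct application of Lemma~\ref{Ax-rank-1} with $n=1$. The hypotheses of the lemma are (1) the logarithmic derivative identity, and (2) that no non-trivial integer linear combination of the $\xi_i$ lies in $C$. Hypothesis (1) is exactly what the corollary assumes, so the only work is to check (2).

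For the verification of (2) in the case $n=1$: I would suppose, for contradiction, that $m\xi \in C$ for some non-zero integer $m$. Applying $D$ gives $m\, D\xi = 0$, and since we are in characteristic zero we may divide by $m$ to conclude $D\xi = 0$. But $\ker D = C$ by definition of the constant field, so $\xi \in C$, contradicting the assumption that $\xi \notin C$. Hence (2) holds.

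Now Lemma~\ref{Ax-rank-1} applies and yields that at least $n+1 = 2$ of the elements in the list $\xi, \hat\xi$ are algebraically independent over $C$. Since the list has only two entries, this forces $\xi$ and $\hat\xi$ themselves to be algebraically independent over $C$, which is the desired conclusion.

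There is no real obstacle here; the only subtlety is making sure that condition \ref{indep-mod-C} of Lemma~\ref{Ax-rank-1} is genuinely weaker than "$\xi \notin C$" in principle, but collapses to the same thing in the single-variable case thanks to characteristic zero. So the proof is essentially a one-line deduction from the lemma, with a short remark to unpack why $\xi \notin C$ suffices to exclude all integer multiples.
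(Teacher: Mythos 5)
Your proof is correct and is exactly the route the paper intends: the corollary is stated as an ``obvious consequence'' of Lemma~\ref{Ax-rank-1} with $n=1$, and your verification that $\xi\notin C$ implies condition \ref{indep-mod-C} (via $mD\xi=0$ and characteristic zero) is the only point that needed spelling out.
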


\section{Duality and periods}\label{periods}
Here we record a number of results on functions and differentials, first of all rational and then only meromorphic, on an algebraic curve. Thus let $\mathcal{C}$ be a complete smooth complex algebraic curve of genus $g \geq 0$, and let $S$ be a finite subset of $\mathcal{C}$ with cardinality $s \geq 1$. Denote by $\mathcal{C}_S$ the (affine) curve $\mathcal{C} \setminus S$. By (meromorphic) \emph{differential} on $\mathcal{C}_S$ we mean a differential $1$-form, that is, $\Psi\de \Phi$ for $\Phi,\Psi$ meromorphic on $\mathcal{C}_S$; we call such a differential \emph{regular} on $\mathcal{C}_S$ if it has no poles, and \emph{rational} if we can take $\Psi,\Phi$ rational.

Let $\Delta_S$ denote the space of all rational differentials that are regular on $\mathcal{C}_S$, modulo the ones of the form $\de\phi$ for $\phi$ rational and with no poles on $\mathcal{C}_S$ (we call those \emph{exact}). In other words, $\Delta_S$ is the first cohomology group of the algebraic de Rham complex of $\mathcal{C}_S$.

\begin{lemma}\label{lem:delta-s}
  The linear space $\Delta_S$ has dimension $2g + s - 1$. Furthermore, given some fixed $P_0$ in $S$, each element of $\Delta_S$ can be represented by a differential with at most simple poles in $S \setminus \{P_0\}$.
\end{lemma}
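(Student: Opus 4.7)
The plan is to analyse $\Delta_S$ through the residue map and to exploit Riemann--Roch (in Mittag--Leffler guise) on the affine curve $\mathcal{C}_S$, together with the classical fact that $H^1_{\mathrm{dR}}(\mathcal{C})$ of a complete curve of genus $g$ has dimension $2g$.

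First I would define the residue map $\rho\colon \Delta_S \to \mathbf{C}^{s-1}$ sending $[\omega]$ to the tuple of residues at the points of $S \setminus \{P_0\}$. It is well-defined because every exact rational differential has vanishing residues, and the residue at $P_0$ itself is pinned down by the residue theorem $\sum_{P\in S}\operatorname{res}_P\omega=0$. Surjectivity follows from the classical existence of differentials of the third kind: for each $P_i\in S\setminus\{P_0\}$ choose a rational differential $\tau_i$ on $\mathcal{C}$ with only simple poles at $P_i$ and $P_0$, of residues $+1$ and $-1$; the $\tau_i$ map onto the standard basis of $\mathbf{C}^{s-1}$.

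Second, I would establish the ``furthermore'' clause, which also produces convenient representatives. Given $\omega\in\Omega_S$, expand it in a local parameter $t_i$ at each $P_i\in S\setminus\{P_0\}$; the ``higher-order polar part'' $\sum_{k\leq-2} a_{i,k}t_i^k\,\de t_i$ is formally $\de\psi_i$ for an explicit Laurent polynomial $\psi_i$ in $1/t_i$ of degree at most $n_i-1$, where $n_i$ is the pole order of $\omega$ at $P_i$. To realise all the $\psi_i$ simultaneously as the principal parts of one global function, I would apply Riemann--Roch: for $N$ sufficiently large, $L(n_1P_1+\cdots+n_{s-1}P_{s-1}+NP_0)$ has dimension $\sum n_i + N + 1 - g$, while its kernel under the combined principal-part map to $\bigoplus_i\mathbf{C}^{n_i}$ is $L(NP_0)$ of dimension $N+1-g$. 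The difference equals the target dimension, so the map is surjective and the required $\phi\in\mathcal{O}(\mathcal{C}_S)$ exists. Then $\omega-\de\phi\in\Omega_S$ has at most simple poles at every point of $S\setminus\{P_0\}$, as claimed.

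Third, for the dimension count, it suffices to show $\ker\rho\cong\mathbf{C}^{2g}$. A class in $\ker\rho$ is represented by $\omega\in\Omega_S$ with zero residues everywhere, i.e.\ a differential of second kind on $\mathcal{C}$. I would identify $\ker\rho$ with $H^1_{\mathrm{dR}}(\mathcal{C})$ via $[\omega]\mapsto[\omega]$. Surjectivity uses the same Riemann--Roch step to modify any second-kind differential on $\mathcal{C}$ by some $\de\phi$ with $\phi\in\mathbf{C}(\mathcal{C})$ so that its poles move into $S$. Injectivity is almost immediate: if $\omega\in\Omega_S^0$ equals $\de\phi$ for some $\phi\in\mathbf{C}(\mathcal{C})$, then $\phi$ can only have poles where $\de\phi$ does, hence $\phi\in\mathcal{O}(\mathcal{C}_S)$ and $[\omega]=0$ already in $\ker\rho$. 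Combined with $\dim H^1_{\mathrm{dR}}(\mathcal{C})=2g$, this yields $\dim\Delta_S = 2g + (s-1)$.

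The main obstacle I anticipate is not any single step but the bookkeeping around $P_0$: the whole argument hinges on the observation that allowing an arbitrarily high-order pole at the single point $P_0$ absorbs the genus-$g$ Mittag--Leffler obstruction $H^1(\mathcal{C},\mathcal{O})$, and this is precisely what lets both the pole-reduction in the ``furthermore'' clause and the surjection in the identification $\ker\rho\cong H^1_{\mathrm{dR}}(\mathcal{C})$ go through uniformly.
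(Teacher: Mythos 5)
Your argument is correct, and it organises the dimension count differently from the paper. The paper avoids any appeal to the classical description of $H^1_{\mathrm{dR}}(\mathcal{C})$: it directly computes, via Riemann--Roch applied to $D=-\sum_{P\in S}P-(m-1)P_0$, the dimension $m+s+g-2$ of the space of differentials with pole of order at most $m$ at $P_0$ and simple poles elsewhere on $S$, subtracts the dimension $m-g-1$ of the exact ones, and gets $2g+s-1$ in one stroke; the ``furthermore'' clause is then what upgrades this subspace count to all of $\Delta_S$. You instead split $\Delta_S$ by the residue sequence $0\to\ker\rho\to\Delta_S\to\mathbf{C}^{s-1}\to 0$ (surjectivity via third-kind differentials) and identify $\ker\rho$ with second-kind differentials modulo exact ones, quoting $\dim H^1_{\mathrm{dR}}(\mathcal{C})=2g$; your injectivity argument (poles of $\phi$ are contained in poles of $\de\phi$) and your well-definedness checks are all sound, noting that the residue theorem forces the residue at $P_0$ to vanish for classes in $\ker\rho$. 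Your treatment of the ``furthermore'' clause is essentially the paper's idea --- trade higher-order polar parts at $S\setminus\{P_0\}$ for a large pole at $P_0$, which is exactly where Riemann--Roch kills the Mittag--Leffler obstruction --- except that you do it in a single surjectivity count on $L\bigl(\sum n_iP_i+NP_0\bigr)$ rather than by the paper's induction on pole orders; the small off-by-one (you only need principal parts of order up to $n_i-1$) is harmless. What your route buys is a cleaner conceptual picture (residues plus the genus term); what the paper's buys is self-containedness, deriving everything from Riemann--Roch without invoking the classical facts about differentials of the second and third kind as black boxes.
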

\begin{proof}
  This is fairly well-known, but we show how to recover it easily from the Riemann-Roch theorem. For a given $\mathbf{Z}$-divisor $D$, let $\mathcal{L}(D)$ be the space of all rational functions on $\mathcal{C}$ with divisor at least $-D$, and $\ell(D)$ be its dimension. Let $K_z$ be the divisor of some rational differential form on $\mathcal{C}$, say $\mathrm{d}z$ for some non-constant rational function $z$. Then Riemann-Roch states that
  \begin{equation}\label{eq:RR}
    \ell(D) = \ell(K_z - D) + \deg(D) + 1 - g.
  \end{equation}
  Furthermore, recall that $\ell(K_z-D)$ is also the dimension of the rational differential forms with divisor at least $D$. See for example \cite[p.\ 17]{Ser1988}.

  Fix some $P_0$ in $S$. For positive $m$ sufficiently large, consider the space of differentials with pole at $P_0$ of order at most $m$, and at most simple poles on the rest of $S$ (and no other poles). This has dimension $\ell(K_z-D)$ for
  \[ D=-\sum_{P \in S} P - (m-1)P_0 \]
  and since here $\ell(D)=0$ we get dimension $m + s + g - 2$. And the subspace of the exact ones (which of course have no simple poles) has dimension $\ell((m-1)P_0) - 1 = m - g - 1$. Thus the quotient, which embeds naturally into $\Delta_S$, has dimension $2g + s - 1$.

  We now claim that all other differentials are equivalent to the ones above. Let $\delta$ be a regular differential on $\mathcal{C}_S$. Suppose that $\delta$ has pole of order $k > 1$ at some $P$ in $S \setminus \{P_0\}$. By \eqref{eq:RR}, for every $m$ large enough, there must be a function $h$ in $\mathcal{L}((k - 1)P + mP_0)$ which is not in $\mathcal{L}((k-2)P + mP_0)$, thus with pole of order exactly $k-1$ at $P$ and no other poles except possibly at $P_0$. It follows that $\delta + \alpha\mathrm{d}h$, for some $\alpha$ in $\mathbf{C}$, has pole of order $\leq k-1$ at $P$, and the same poles as $\delta$ on the rest of $S \setminus \{P_0\}$. By an easy induction on the orders of the poles of $\delta$ outside of $P_0$, one finds $f$ such that $\delta + \mathrm{d}f$ has poles of order at most one on $S \setminus \{P_0\}$, as required.
\end{proof}

For $\mathcal{C}$ and $S$ as above, it is well-known that the homology $H_1(\mathcal{C}_S)$ of $\mathcal{C}_S=\mathcal{C}\setminus S$ is free of rank $2g+s-1$ (see for example \cite[p.\ 101]{Ser1988}).

\begin{lemma}\label{lem:non-degen}
  The pairing $H_1(\mathcal{C}_S) \times \Delta_S \to \mathbf{C}$ induced by integration is non-degenerate.
\end{lemma}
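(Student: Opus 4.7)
By Lemma~\ref{lem:delta-s} the space $\Delta_S$ has dimension $2g + s - 1$, and by the topological fact just quoted so does $H_1(\mathcal{C}_S)$. Since the two spaces have equal (finite) dimension, non-degeneracy of the pairing will follow as soon as I show injectivity on one side. I will show it on the differential side: if $\delta \in \Delta_S$ satisfies $\int_\gamma \delta = 0$ for every $\gamma \in H_1(\mathcal{C}_S)$, then $\delta$ is exact on $\mathcal{C}_S$, hence zero in $\Delta_S$.

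Fix the distinguished point $P_0 \in S$ of Lemma~\ref{lem:delta-s}. I first replace $\delta$ by an equivalent representative with at most simple poles at every point of $S \setminus \{P_0\}$. For each such $P$ a small loop $\gamma_P$ based at a nearby point and encircling $P$ once positively is a cycle in $\mathcal{C}_S$, and the hypothesis gives
\[ 0 = \int_{\gamma_P} \delta = 2\pi i \cdot \mathrm{Res}_P(\delta), \]
so $\mathrm{Res}_P(\delta) = 0$ for every $P \in S \setminus \{P_0\}$. The residue theorem on the complete curve $\mathcal{C}$ (sum of all residues of a rational differential is zero) then forces $\mathrm{Res}_{P_0}(\delta) = 0$ as well. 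At each $P \neq P_0$ a simple pole with vanishing residue is no pole at all, so $\delta$ is actually regular there; at $P_0$ it may still have a pole of higher order, but without residue.

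Next, fix a base point $Q_0 \in \mathcal{C}_S$ and define
\[ \phi(Q) = \int_{Q_0}^{Q} \delta \]
along any path in $\mathcal{C}_S$. Two such paths differ by a cycle in $H_1(\mathcal{C}_S)$, whose period vanishes by assumption, so $\phi$ is single-valued and holomorphic on $\mathcal{C}_S$, with $\mathrm{d}\phi = \delta$. Around each $P \in S$ the absence of a residue in the local Laurent expansion of $\delta$ ensures that a local antiderivative is itself meromorphic at $P$ (regular at points of $S \setminus \{P_0\}$, with pole of order one less than that of $\delta$ at $P_0$). Hence $\phi$ extends to a meromorphic function on all of $\mathcal{C}$, that is, a rational function, whose poles lie only in $S$; so $\delta = \mathrm{d}\phi$ is exact in the sense of Lemma~\ref{lem:delta-s} and represents $0$ in $\Delta_S$.

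The main technical point is the passage from ``no periods'' to ``rational primitive'': once Lemma~\ref{lem:delta-s} puts $\delta$ into the form with at most simple poles away from $P_0$, the residue/period argument together with the sum-of-residues identity is the crux, after which extending $\phi$ meromorphically across $S$ is automatic. No appeal to Grothendieck's comparison theorem or to Hironaka's resolution is needed.
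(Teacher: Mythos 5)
Your proof is correct and follows essentially the same route as the paper's: both reduce, via the equality of dimensions, to showing that a differential with vanishing periods admits a single-valued primitive $\phi$ on $\mathcal{C}_S$ that extends meromorphically across $S$ and is therefore rational. The only difference is local: you justify the extension by killing all residues (small loops plus the global residue theorem) and integrating the Laurent series, whereas the paper uses a growth bound $|\phi(P)|\leq c|z(P)|^{-\kappa}$ near each point of $S$; both are standard, and in fact your preliminary normalization to simple poles away from $P_0$ is unnecessary, since the vanishing of periods over small loops already forces every residue to vanish whatever the pole orders.
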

\begin{proof}
  Since $H_1(\mathcal{C})$ and $\Delta_S$ have both dimension $2g+s-1$, it suffices to observe the following: if a regular differential $\delta$ on $\mathcal{C}_S$ is such that $\oint_\Gamma\delta = 0$ over every closed path $\Gamma$ on $\mathcal{C}_S$, then $\delta$ is exact, that is, $\delta = \de\phi$ for some rational function $\phi$.

  Let $\delta$ be one such form. Then for instance $\int_{Q_0}^Q\delta$, with $Q_0$ in $\mathcal{C}_S$ fixed, defines a regular function $\phi$ on $\mathcal{C}_S$, and we have $\delta = \de\phi$. Write $\delta=f\mathrm{d}z$, where $f,z$ are rational and $z$ is a local parameter at some $Q$ in $S$. Clearly, if $f$ has a pole of order $k \geq 1$ at $Q$, then for any $\kappa>k-1$ there is $c$ such that $|\phi(P)| \leq  c|z(P)|^{-\kappa}$ for all $P$ in a neighbourhood of $Q$. It follows that $\phi$ extends to a meromorphic function on $\mathcal{C} = \mathcal{C}_S \cup S$, thus $\phi$ is rational by the Riemann Existence Theorem or Chow's Theorem.
\end{proof}

\begin{lemma}\label{lem:rational-diffs}
  Let $\Phi$, $\Psi$ be functions analytic on $\mathcal{C}_S$. Then there is a rational differential $\delta$ regular on $\mathcal{C}_S$ and a function $\phi$ analytic on $\mathcal{C}_S$ such that
  \begin{equation}\label{eq:rational-diffs}
    \Psi\de \Phi\ =\ \delta+\de \phi.
  \end{equation}
\end{lemma}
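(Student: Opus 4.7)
The plan is to use Lemma~\ref{lem:non-degen} to match the periods of $\Psi\,\de\Phi$ by those of a rational differential, and then to integrate the difference.

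First I would observe that $\Psi\,\de\Phi$ is a holomorphic $1$-form on $\mathcal{C}_S$, since $\Phi$ and $\Psi$ are analytic there. Then for a fixed basis $\gamma_1,\ldots,\gamma_{2g+s-1}$ of $H_1(\mathcal{C}_S)$ I would record the period vector
\[
  \lambda_i \;=\; \oint_{\gamma_i} \Psi\,\de\Phi \qquad (i=1,\ldots,2g+s-1).
\]

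Next I would appeal to Lemma~\ref{lem:non-degen}: the pairing $H_1(\mathcal{C}_S)\times \Delta_S\to\mathbf{C}$ is non-degenerate between two spaces of the same finite dimension $2g+s-1$, so the period map $\Delta_S\to\mathbf{C}^{2g+s-1}$ is an isomorphism. In particular there exists a rational differential $\delta$, regular on $\mathcal{C}_S$, whose cohomology class in $\Delta_S$ satisfies
\[
  \oint_{\gamma_i} \delta \;=\; \lambda_i \qquad (i=1,\ldots,2g+s-1).
\]

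Then $\omega := \Psi\,\de\Phi - \delta$ is a holomorphic $1$-form on $\mathcal{C}_S$ all of whose periods along a basis of $H_1(\mathcal{C}_S)$, and hence along every closed path, vanish. Fixing a basepoint $Q_0\in\mathcal{C}_S$, the integral
\[
  \phi(Q) \;=\; \int_{Q_0}^{Q}\omega
\]
is therefore path-independent and defines a single-valued analytic function $\phi$ on $\mathcal{C}_S$ with $\de\phi=\omega$. This yields the decomposition \eqref{eq:rational-diffs}.

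The main subtlety is packaged into the earlier lemmas: showing that the algebraic cohomology $\Delta_S$ has the ``right'' dimension $2g+s-1$ and pairs non-degenerately with the topological homology. Once those are in hand the present statement reduces to the standard fact that a holomorphic $1$-form with vanishing periods on a connected Riemann surface is exact, applied to $\Psi\,\de\Phi-\delta$ on $\mathcal{C}_S$.
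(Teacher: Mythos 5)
Your proposal is correct and follows essentially the same route as the paper: match the periods of $\Psi\,\de\Phi$ by a rational differential $\delta$ via the non-degenerate pairing of Lemma~\ref{lem:non-degen}, then integrate the period-free difference to obtain $\phi$. No gaps.
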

\begin{proof}
  Consider the periods given by the integrals $\oint_{\Gamma} \Psi\mathrm{d}\Phi$. By Lemma~\ref{lem:non-degen} there is a regular differential $\delta$ on $\mathcal{C}_S$ with exactly the same periods. In particular, $\oint_{\Gamma}\Psi\mathrm{d}\Phi = \oint_{\Gamma}\delta$ for every $\Gamma$ in $H_1(\mathcal{C}_S)$. It follows that
  \[ \phi(Q) = \int_{Q_0}^Q \left(\Psi\mathrm{d}\Phi-\delta\right)\]
  for some fixed $Q_0$ in $\mathcal{C}_S$ defines an analytic function on $\mathcal{C}_S$, hence $\Psi\mathrm{d}\Phi - \delta = \mathrm{d}\phi$, as desired.
\end{proof}

One can also deduce Lemma~\ref{lem:rational-diffs} from Grothendieck's paper \cite{Gro1966} (which is actually an extract from a letter to Atiyah). It relates the complex cohomology (denoted by $H^*(\mathcal{X},\mathbf{C})$ there) to the de Rham cohomology (denoted by $H^*(\mathcal{X},\Omega_\mathcal{X})$ there), even for an algebraic variety $\mathcal{X}$ of any dimension. The proof uses Hironaka's resolution of singularities, which for curves $\mathcal{X}$ is classical.

Further one can describe the pairing in Lemma~\ref{lem:non-degen} more explicitly by looking at period matrices. Enumerate $S = \{P_0, P_1, \ldots, P_{s-1}\}$. We can make a $\mathbf{Z}$-basis
\begin{equation}\label{eq:hom-basis}
  \mathcal{L}_1,\ldots,\mathcal{L}_{2g},\mathcal{M}_1,\ldots,\mathcal{M}_{s-1}
\end{equation}
for $H_1(\mathcal{C}_S)$ out of basis elements $\mathcal{L}_1,\ldots,\mathcal{L}_{2g}$ of $H_1(\mathcal{C})$ and small loops $\mathcal{M}_1,\ldots,\mathcal{M}_{s-1}$ around the points $P_1,\ldots,P_{s-1}$  respectively of $S \setminus\{P_0\}$.

By Lemma~\ref{lem:delta-s} applied to $S_0 = \{P_0\}, S_1 = \{P_0, P_1\}, \dots$ and linear algebra we can make a $\mathbf{C}$-basis
\begin{equation}\label{eq:cohom-basis}
  \rho_1,\ldots,\rho_{2g},\sigma_1,\ldots,\sigma_{s-1}
\end{equation}
for $\Delta_S$ out of differentials of the second kind $\rho_1,\ldots,\rho_{2g}$ (with pole at most in $P_0$ and residue zero) together with $\sigma_1,\ldots,\sigma_{s-1}$ (with simple poles exactly at $P_1,\ldots,P_{s-1}$ respectively and only other pole at $P_0$). Indexing the rows by the differentials and the columns by the loops, we get a period matrix. The block $\Pi$ of size $2g$ in the top left corner corresponds to the period matrix for the case $S = \{P_0\}$, hence it is non-singular, and to its right there is a zero block. The block underneath $\Pi$ we do not know, but to its right we get the diagonal matrix whose diagonal entries are the (non-zero) residues of $\sigma_1,\ldots,\sigma_{s-1}$ at $P_1,\ldots,P_{s-1}$. We discuss in Section~\ref{examples} some examples in which such differentials can be made explicit via well known special functions.

\section{Proof of Theorem~\ref{thm1}}\label{main-proof}
It suffices to prove the result when $\mathcal{V}$ has dimension $n$. In fact the hypotheses imply the dimension is $n$ or $n+1$, and in the latter case we can simply adjoin $\hat X_n=c$.

Let $\mathcal{C}_0$ be the Zariski closure of $\pi(\mathcal{V})$ in $\mathbf{C}^n$, and let $F,G_0$ be as in Lemma~\ref{lem1}. Let $\mathcal{C}$ be a complete smooth model of $\mathcal{C}_0$, so that the coordinate functions $x_1,\ldots,x_n$ on $\mathcal{C}_0$ can be regarded as rational functions $\xi_1,\ldots,\xi_n$ on $\mathcal{C}$. Choose a non-empty finite subset $S$ of $\mathcal{C}$ containing all the poles of $\xi_1,\ldots,\xi_n$ and the zeroes of $G_0(\xi_1,\ldots,\xi_n)$ (which is not identically zero). It will then suffice to find a point of $\mathcal{C}_S = \mathcal{C} \setminus S$ at which the function
\begin{equation}\label{PHI}
  \Phi=F(\xi_1,\ldots,\xi_n,e^{\xi_1},\ldots,e^{\xi_n})
\end{equation}
vanishes.

Note that we can take $S$ arbitrarily large, and this will show that the points $(z_1,\ldots,z_n,e^{z_1},\ldots,e^{z_n})$ of $\mathcal{V}$ project to a set which is Zariski dense in $\mathcal{C}_0$, as mentioned in Section~\ref{intro}.

We assume that there are no such points, and we will reach a contradiction. Thus $\Phi$ does not vanish on $\mathcal{C}_ S$. By Lemma~\ref{lem:rational-diffs} applied to $\Psi=1/\Phi$ we have \eqref{eq:rational-diffs} for some rational differential $\delta$ on $\mathcal{C}$, regular on $\mathcal{C}_ S$, and a function $\phi$, analytic on $\mathcal{C}_ S$.

\begin{claim}\label{claim-phi-rational}
  Under the above assumptions, $\phi$ is rational on $\mathcal{C}$.
\end{claim}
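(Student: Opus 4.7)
The plan is to show that $\phi$ extends meromorphically across each point of $S$; since the only meromorphic functions on the compact Riemann surface $\mathcal{C}$ are the rational ones (Riemann Existence Theorem, as already used in the proof of Lemma~\ref{lem:non-degen}), this is enough. I would carry this out point by point: fix $P\in S$ and a local parameter $z$ at $P$, and work on a punctured disc $0<|z|<\varepsilon$.

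On any simply connected piece of that punctured disc the identity $\de\phi=\de\Phi/\Phi-\delta$ integrates to $\phi=\log\Phi-\int\delta+\mathrm{const}$. Since $\phi$ is globally single-valued on the punctured disc, the monodromies of $\log\Phi$ and of $\int\delta$ around $P$ must agree: the winding number $m_P$ of $\Phi$ on a small loop about $P$ equals $\mathrm{Res}_P\delta$. In particular this common value is a rational integer, both monodromies are purely imaginary, and $\log|\Phi|$ together with $\Re\int\delta$ are honest single-valued functions on the punctured disc.

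Next I would bound $\Re\phi=\log|\Phi|-\Re\int\delta+\mathrm{const}$ from above by a negative power of $|z|$. Each $\xi_i$ is meromorphic at $P$, so $|\xi_i|\leq c/|z|^k$ for suitable $c,k$, and therefore $|e^{\xi_i}|=\exp(\Re\xi_i)\leq\exp(c/|z|^k)$; since $F$ is a polynomial in $2n$ variables, this gives $\log|\Phi|\leq c_1/|z|^{k_1}$ for suitable $c_1,k_1$. For the other term, $\delta$ has at most a pole of some finite order at $P$, so integrating its Laurent expansion yields a meromorphic piece plus $\mathrm{Res}_P\delta\cdot\log z$, both of whose real parts are bounded by $c_2/|z|^{k_2}$. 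Combining, $\Re\phi(z)\leq C/|z|^{\kappa}$ on the punctured disc. Lemma~\ref{lem3} then upgrades this to $|\phi(z)|\leq C'/|z|^{\kappa+1}$ on a possibly smaller punctured disc, so $\phi$ has at worst a pole at $P$. Running this over every $P\in S$ gives the claim.

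The step I expect to be the main obstacle is the careful bookkeeping of the multivaluedness of $\log\Phi$ and $\int\delta$ on each small punctured neighbourhood of a point of $S$, and in particular the matching of their monodromies forced by the single-valuedness of $\phi$; it is this that makes $\Re\int\delta$ a genuine function on the punctured disc and hence the estimate on $\Re\phi$ meaningful. Once this point is secured, the polynomial nature of $F$, the meromorphy of the $\xi_i$ and of $\delta$ at $P$, and Lemma~\ref{lem3} combine to give the desired polynomial-order bound on $|\phi|$.
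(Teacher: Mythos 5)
Your argument is correct and follows essentially the same route as the paper: match the (integral) monodromies of $\log\Phi$ and $\int\delta$ at each puncture, bound $\Re\phi$ above by a negative power of the local parameter using the polynomial form of $\Phi$ and the finite-order pole of $\delta$, and then invoke Lemma~\ref{lem3} and Riemann Existence. The only (harmless) difference is bookkeeping: the paper globalizes by setting $\Phi_0=\exp(\int\delta)$ on all of $\mathcal{C}_S$ and estimating along short contours, whereas you work locally at each $P\in S$ via Laurent expansions.
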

\begin{proof}
  We note that all periods of ${\de \Phi / \Phi}$ are in $2\pi i \bf Z$, because any integral is the variation of $\log \Phi$ continuously along the contour (``Principle of the Argument''). As these are also the periods of $\delta={\de \Phi / \Phi}-\de \phi$, we can define $\exp(\int \delta)$ as a function on $\mathcal{C}_S$, for example as
  \begin{equation}\label{exp}
    \Phi_0(Q) = \exp\left(\int_{Q_0}^Q\delta\right) = \exp\left(\int_{Q_0}^Q\left(\frac{\mathrm{d}\Phi}{\Phi} - \mathrm{d}\phi\right)\right)
  \end{equation}
  for any fixed $Q_0$ in $\mathcal{C}_ S$. Thus ${\de \Phi_0/\Phi_0}=\delta$. Comparing this with \eqref{eq:rational-diffs} we deduce that
  \begin{equation}\label{had}
    \Phi(Q)=c_0\Phi_0(Q)e^{\phi(Q)}, \qquad (c_0=\Phi(Q_0)e^{-\phi(Q_0)} \neq 0)
  \end{equation}
  and from this we will estimate $\phi(Q)$ as $Q$ approaches some point $P$ of $S$.

  Let $z$ be a local parameter at this $P$, so that we can regard everything in \eqref{had} as functions of $z$. Suppose $\xi_1,\ldots,\xi_n$ have poles of orders at most $k \geq 0$ at $P$. Then $|\Phi(Q)| \leq c\exp(c|z(Q)|^{-k})$  by \eqref{PHI} for some $c$ independent of $Q$. Next if we write $\delta=\psi\de z$ then $\psi$ has a pole of order at most $k_0 \geq 0$ at $P$. In \eqref{exp} it is not difficult to see that we can choose the contour to have length bounded independently of $Q$ (near $P$) and with $|\psi| \leq c|z|^{-k_0}$, and it follows that $|\int_{Q_0}^Q\delta| \leq c|z|^{-k_0}$ for some $c$ independent of $Q$ (where for short $z = z(Q)$). Thus $|\Phi_0(Q)|^{-1} = \exp(-\Re\int_{Q_0}^Q\delta) \leq \exp(c|z|^{-k_0})$. So we get similar bounds for $|e^{\phi(Q)}| = |c_0^{-1}\Phi(Q)\Phi_0(Q)^{-1}|$, and it follows that
  \[ \Re \phi(Q) = \log|e^{\phi(Q)}| \leq c|z|^{-\kappa} \]
  for $\kappa=\max\{k,k_0\}$ and some $c$ independent of $Q$. By Lemma~\ref{lem3} we deduce
  \[ |\phi(Q)| \leq c|z|^{-\kappa-1}. \]
  Thus $\phi$ is meromorphic at $P$, and so (as in the proof of Lemma~\ref{lem:non-degen}, say by Riemann Existence) is rational on $\mathcal{C}$ as claimed.
\end{proof}

We can now obtain our contradiction using Ax (Lemma~\ref{Ax-rank-1}) on \eqref{had}. We can identify the function field $K_0=\mathbf{C}(x_1,\ldots,x_n)$ of the affine part of $\mathcal{C}$ with $\mathbf{C}(\xi_1,\ldots,\xi_n)$, and we take ${K}=K_0(\hat\xi_1,\ldots,\hat\xi_n)$ with $\hat\xi_1=e^{\xi_1},\ldots,\hat\xi_n=e^{\xi_n}$.

For a non-constant rational function $z$ on $\mathcal{C}$ we have a derivation $D=\de /\de z$ on $K$ with constant field $C=\bf C$, also acting on $K_0$. Note that \ref{diff-eq} of Lemma~\ref{Ax-rank-1} holds. With $\delta=\psi\de z$ as above we have $D\Phi_0/\Phi_0=\psi$ from \eqref{exp} and so
\begin{equation}\label{EQ}
  D\Phi=\chi\Phi
\end{equation}
for $\chi=\psi+D\phi$ in $K_0$. Writing $F = \sum_{\mathbf{i}}F_{\mathbf{i}}M_{\mathbf{i}}$ for $\mathbf{i}=(i_1,\ldots,i_n)$, where $F_{\mathbf{i}}$ is in $\mathfrak{R}_0=\mathbf{C}[X_1,\ldots,X_n]$ and $M_{\mathbf{i}}=\hat X_1^{i_1}\cdots\hat X_n^{i_n}$, we get $\Phi=\sum_{\mathbf{i}}\gamma_{\mathbf{i}}\Phi_{\mathbf{i}}$ for $\gamma_{\mathbf{i}}$ in $K_0$ and
\[ \Phi_{\mathbf{i}}=\hat \xi_1^{i_1}\cdots\hat \xi_n^{i_n}. \]
Also $D\Phi_\mathbf{i}/\Phi_{\mathbf{i}}=\xi_{\mathbf{i}}$ for
\[ \xi_{\mathbf{i}}=i_1D\xi_1+\cdots+i_nD \xi_n. \]
We find from \eqref{EQ} the equations $\sum_{\mathbf{i}}\beta_{\mathbf{i}}\Phi_{\mathbf{i}}=0$ for
\[ \beta_{\mathbf{i}}=D\gamma_{\mathbf{i}}+\xi_{\mathbf{i}}\gamma_{\mathbf{i}}-\chi\gamma_{\mathbf{i}} \]
also in $K_0$.

If some $\beta_{\mathbf{i}} \neq 0$ this shows that the transcendence degree of $K_0(\hat\xi_1,\ldots,\hat\xi_n)$ over $K_0$ is at most $n-1$, so over $\mathbf{C}$ at most $n$, contradicting Lemma~\ref{Ax-rank-1} (note that \ref{indep-mod-C} there holds because we are assuming that $m_1X_1+\cdots+m_nX_n$ is not constant on $\mathcal V$). Thus we may assume that all $\beta_{\mathbf{i}}=0$.

Next suppose there are two different ${\bf i,i'}$ with
\[ \gamma=\gamma_{\mathbf{i}} \neq 0 \neq \gamma_{\bf i'}=\gamma'. \]
Then
\[ 0=\frac{\beta_{\mathbf{i}}}{\gamma}-\frac{\beta_{\bf i'}}{\gamma'}=\frac{D\gamma}{\gamma}-\frac{D\gamma'}{\gamma'}+(\xi_{\mathbf{i}}-\xi_{\bf i'}), \]
and it follows that $D\xi=D\hat\xi/\hat\xi$ for
\[ \hat\xi=\frac{\gamma'}{\gamma}, \qquad \xi=m_1\xi_1+\cdots+m_n\xi_n \]
with $(m_1,\ldots,m_n)=\mathbf{i}-\mathbf{i}' \neq 0$. Here $\xi,\hat\xi$ are both in $K_0$, so algebraically dependent over $\mathbf{C}$. Thus by Corollary~\ref{Ax-HL} $\xi$ lies in $\mathbf{C}$. However this is also ruled out by our assumption on $m_1X_1+\cdots+m_nX_n$.

Thus there is at most one non-zero $\gamma_{\mathbf{i}}$, and the $F_{\bf i'}~({\bf i'} \neq \mathbf{i})$ are in the prime ideal $\mathfrak P_0$ of $\mathcal{C}$ in $\mathfrak R_0$. But then $F=F_{\mathbf{i}}M_{\mathbf{i}}+\sum_{\mathbf{i}'\neq \mathbf{i}}F_{\bf i'}M_{\bf i'}$ would be in $M_{\mathbf{i}}\mathfrak R_0+\mathfrak P_0\mathfrak R$, excluded in Lemma~\ref{lem1}.

This completes the proof of Theorem~\ref{thm1}.

\section{Proof of Theorem~\ref{thm2}}\label{subcases}
We go through it case-by-case-by-subcase-by-subsubcase.

\subsection*{Case \ref{dim-pi-S-0}} Here $\dim \pi(\mathcal{S}) = 0$, and the conclusion is clear. An example is $X_1=0$, $X_2=0$ with $Z$ as the single point $(0,0,1,1)$.

\subsection*{Case \ref{dim-pi-S-2}} Here $\dim \pi(\mathcal{S}) = 2$, this follows from \cite{BM2017}, even with $Z$ dense in $\mathcal{S}$. An elementary example is $\hat X_1=1,\hat X_2=1$ with $Z=(2\pi i\mathbf{Z})^2 \times \{1\}^2$.

\subsection*{Case \ref{dim-pi-S-1-free}} For $\dim \pi(\mathcal{S}) = 1$ and $\pi(\mathcal{S})$ not contained in a line of rational slope, the conclusion is essentially our Theorem~\ref{thm1} for $n=2$; we just have to recall the remark at the beginning of Section~\ref{main-proof} that the finite set $S$ can be taken arbitrarily large. We have already given some examples but an elementary one is $X_1X_2=1$, $\hat X_1\hat X_2=1$ with $Z$ as the set of all $(z,1/z,e^z,e^{1/z})$ with $z+1/z$ in $2\pi i \mathbf{Z}$.

\medskip

We now consider $\pi(\mathcal{S})$ of dimension $1$ and contained in a line of rational slope $\mathcal{L}$. Say that $\mathcal{L}$ is defined by $m_1X_1 + m_2X_2 = c$ with $m_1,m_2$ integers not both zero and $c$ complex, as in \eqref{LRS}. Then $\mathcal{K}$ is given by
\begin{equation}\label{KRS}
  \hat X_1^{m_1}\hat X_2^{m_2}=\hat c
\end{equation}
with $\hat c=e^c$. Recall that $\mathcal{T}$ from \eqref{GT} is $\mathcal{T} = \mathcal{S} \cap \mathcal{G} = \mathcal{S} \cap (\mathcal{L} \times \mathcal{K})$.

\subsection*{Subcase \ref{dim-T--1} of case \ref{dim-pi-S-1-nonfree}} Here the set $\mathcal{T}$ is empty, and the set $Z$ of $(z_1,z_2,e^{z_1},e^{z_2})$ in $\mathcal S$ lies in $\mathcal{G}$ and so in $\mathcal{T}$, hence $Z$ is empty. We already gave the example \eqref{NZ}.

\subsection*{Subcase \ref{dim-T-2}} Here $\dim\mathcal{T} = 2$, and we can even show that $Z$ is dense in $\mathcal{S}$. Note that in this case $\mathcal{S} = \mathcal{G} = \mathcal{T}$ (because $\mathcal{S}$ and $\mathcal{G}$ are irreducible surfaces), and thus $Z$ coincides with the set of $(z_1,z_2,e^{z_1},e^{z_2})$ with $(z_1,z_2)$ in $\mathcal{L}$. It is now not difficult to see, using the algebraic independence of $z$ and $e^z$, that $Z$ is Zariski dense in $\mathcal{S}$; for example if $m_2 \neq 0$ it contains the set of $(z,z',e^z,e^{z'})$ as $z$ varies in $\bf C$, where $z'=(c-m_1z)/m_2$ (see also the parametrizations \eqref{LP}, \eqref{KP} below). An example is $X_1+X_2=1,\ \hat X_1\hat X_2=e$ with $Z$ as the set of $(z,1-z,e^z,e^{1-z})$ for $z$ in $\mathbf{C}$.

\medskip

Finally, we settle the remaining cases with $\dim\mathcal{T} = 1$.

\subsection*{Subsubcase \ref{dim-pi-T-0} of subcase \ref{dim-T-1}} Here $\dim\pi(\mathcal{T}) = 0$. Since the set $\pi(Z)$ is contained in $\pi(\mathcal{T})$, it must be finite or empty. Thus $Z$ is finite or empty as well.

In fact $Z$ cannot be empty. Namely, for each $Q$ in $\pi(\mathcal{T})$ the fibre $\pi^{-1}(Q)$ in $\mathcal{T}$ has $\dim \pi^{-1}(Q) \leq 1$, and there must be $Q$ with equality. As $\mathcal{T}$ is in $\mathcal{G=L \times K}$, the fibre must be the whole of $Q \times \mathcal{K}$. Writing $Q=(z_1,z_2)$ we see that in particular $(z_1,z_2,e^{z_1},e^{z_2})$ is in the fibre, so in $\mathcal{T}$ and therefore $\mathcal{S}$.

An example is $X_1+X_2=1,~X_1+\hat X_1\hat X_2=e$ with $Z$ as the single point $(0,1,1,e)$. However changing the second equation here to $X_1-X_1^2+\hat X_1\hat X_2=e$ gives an extra point $(1,0,e,1)$ and so $Z$ need not be a single point as in case \ref{dim-pi-S-0}.

\subsection*{Subsubcase \ref{dim-pi-T-1}} This final possibility, $\dim\pi(\mathcal{T}) = 1$, involves the operation of ``back-substitution'' to land in $\mathbf{C} \times \mathbf{C}^*$, so we give the full details. For the moment we assume only the hypotheses in \ref{dim-pi-S-1-nonfree}, that is, $\pi(\mathcal{S})$ has dimension $1$ and is contained in a line $\mathcal{L}$ of rational slope.

We can suppose that $m_1,m_2$ are coprime.  Fix integers $a_1,a_2$ with $a_1m_1+a_2m_2=1$. We can parametrize $\mathcal{L}$ by
\begin{equation}\label{LP}
  X_1=a_1c+m_2Y, \quad X_2=a_2c-m_1Y
\end{equation}
and correspondingly $\mathcal{K}$ by
\begin{equation}\label{KP}
  \hat X_1=\hat c^{a_1}\hat Y^{m_2}, \quad \hat X_2=\hat c^{a_2}\hat Y^{-m_1}.
\end{equation}
By Lemma~\ref{lem1} our $\mathcal{S}$ is defined by $m_1X_1+m_2X_2=c$ and $F=0$. Thus $\mathcal{T}$ is defined by $m_1X_1 + m_2X_2 = c$, \eqref{KRS} and $F=0$. We now check, as mentioned in the introduction, that if $\mathcal{T}$ is non-empty then it is infinite.

For this we define a morphism $f$ from $\mathcal{S}$ to $\mathbf{G}_{\rm m}$ by $f(X_1,X_2,\hat X_1,\hat X_2)=\hat X_1^{m_1}\hat X_2^{m_2}$. If $f$ were not dominant, then $\hat X_1^{m_1}\hat X_2^{m_2}$ would be a constant $\hat c'$ on $\mathcal{S}$. Then $\mathcal{S}$ would be contained in $\mathcal{L} \times \mathcal{K}'$ for some translate $\mathcal{K}'$ of $\mathcal{K}$. As $m_1,m_2$ are coprime, $\mathcal{K}'$ is irreducible; so $\mathcal{S}=\mathcal{L} \times \mathcal{K}'$. Now in fact $\mathcal{K}'=\mathcal{K}$, else $\mathcal{S}$ and $\mathcal{G}=\mathcal{L}\times\mathcal{K}$ would not intersect and $\mathcal{T}$ would be empty. Thus $\mathcal{S}=\mathcal{L} \times \mathcal{K}=\mathcal{G}$ and $\mathcal{T}=\mathcal{S}$ is certainly infinite (and we end up in subcase \ref{dim-T-2}).

Thus we can suppose that $f$ is dominant. Now the Fibre Dimension Theorem \cite[p.\ 228]{DS1998} says that $f^{-1}(\hat c)=\mathcal{T}$ has dimension at least 1; and this finishes the checking.

Define $\tilde{\mathcal{D}}$ in ${\bf C \times C}^*$ as the set of $(Y,\hat Y)$ with $G=0$, where
\[ G=G(Y,\hat Y)=F(a_1c+m_2Y,a_2c-m_1Y,\hat c^{a_1}\hat Y^{m_2},\hat c^{a_2}\hat Y^{-m_1}). \]
Then \eqref{LP}, \eqref{KP} define a map $\varphi$ from $\tilde{\mathcal{D}}$ to $\mathcal{T}$. Its inverse is given by for example
\[ Y=a_2X_1-a_1X_2, \qquad \hat Y=\hat X_1^{a_2}\hat X_2^{-a_1} \]
and so we have isomorphisms.

Therefore returning to our subsubcase \ref{dim-pi-T-1} we must have $\dim \tilde{\mathcal{D}}=1$. In particular $G$ is not identically constant.

However it may not be irreducible; but any irreducible factor gives an irreducible curve. There is at least one of these curves, say $\mathcal{D}$, on which $Y$ is not constant, else $Y$ would take at most finitely many values on $\tilde{\mathcal{D}}$ and then $(X_1,X_2)$ would take at most finitely many values in $\phi(\tilde{\mathcal{D}})=\mathcal{T}$ by \eqref{KRS}, contrary to our assumption $\dim \pi(\mathcal{T})=1$. Therefore by Theorem~\ref{thm1} the set $W$ of points $(w,e^w)$ on $\mathcal{D}$ project to a Zariski dense subset of $\mathbf{C}$. It follows that $\pi(\varphi(W))$ is Zariski dense in $\pi(\mathcal{S})$. Finally $\varphi(W)$ is contained in $Z$ by \eqref{LP} and \eqref{KP}.

An example is \eqref{EX}; the choice $a_1=0$, $a_2=1$ leads to $Y=X_1$, $\hat Y=\hat X_1$ and so \eqref{EXX}.

\section{Examples}\label{examples}
We can actually go further with \eqref{had} in the style $e^z+e^{1/z}-1=z^me^{\phi(z)}$ of \eqref{HFT}. Namely the period matrix just after \eqref{eq:cohom-basis} is some invertible $M$. Thus we can act on \eqref{eq:cohom-basis} by $2\pi iM^{-1}$ to get $2\pi i I_h$ for the identity matrix of order $h=2g+s-1$, and then integrating these and exponentiating as in \eqref{exp} gives $\Phi_1,\ldots,\Phi_h$ analytic on $\mathcal{C}_S$ and never vanishing there. Now the period (row) vector of $\de \Phi/\Phi$ is $2\pi i\mathbf{m}$ for some $\mathbf{m}=(m_1,\ldots,m_h)$ in $\mathbf{Z}^h$, and so we find
\begin{equation}\label{ghad}\Phi=\Phi_1^{m_1}\cdots\Phi_h^{m_h}e^\phi
\end{equation}
in \eqref{had}.

Originally we proved \eqref{ghad} for small genus actually by constructing $\Phi_1,\ldots,\Phi_h$ directly in an \emph{ad hoc} fashion. As some amusing formulae turned up we feel it may be of some interest to present our constructions here.

\subsection*{Case $g = 0$} Now $\mathcal{C}$ may be taken as $\mathbf{P}_1$, which we identify with $\mathbf{C} \cup \{\infty\}$.

If $s=1$ then $h=0$ and there is nothing to do. So we assume $s \geq 2$.

If $S$ contains $\infty$ then $S=\{\infty,p_1,\ldots,p_{s-1}\}$, and clearly $z-p_1,\ldots,z-p_{s-1}$ are the desired $\Phi_i$'s. And indeed we find $\de (z-p_i)/(z-p_i)=\de z/(z-p_i)=\sigma_i$ satisfying exactly the same conditions as in the original \eqref{eq:cohom-basis}. The period matrix in Lemma~\ref{lem:non-degen}, with the homology basis \eqref{eq:hom-basis}, is $2\pi iI_{s-1}$  (provided we choose the appropriate orientations). Thus $\delta$ in Lemma~\ref{lem:rational-diffs} for general $\Phi$ must have a decomposition
\[ \delta=m_1\sigma_1+\cdots+m_{s-1}\sigma_{s-1} \]
now with integer coefficients. And so $\Phi_0(z)=(z-p_1)^{m_1}\cdots(z-p_{s-1})^{m_{s-1}}$ in the proof of Claim~\ref{claim-phi-rational}. Thus if $\Phi$ is any function analytic on $\mathbf{C}_S$ and not vanishing there, it has the form
\[ \Phi(z)=(z-p_1)^{m_1}\cdots(z-p_{s-1})^{m_{s-1}}e^{\phi(z)} \]
for some $\phi$ also analytic on $\mathbf{C}_S$, exactly as in Example~\ref{ex1}. The special case $s=2$ and $p_1=0$ is \eqref{HFT}.

If $S=\{p_0,p_1,\ldots,p_{s-1}\}$ does not contain $\infty$, then we can use in a similar way for example
\[ {z-p_1 \over z-p_0},\ldots,{z-p_{s-1} \over z-p_0}. \]

\subsection*{Case $g = 1$} Now $\mathcal{C}$ can be taken as an elliptic curve $\mathcal{E}$, with origin $O$, whose affine part is
\[ y^2=4x^3-g_2x-g_3. \]
It is parametrized by the Weierstrass functions $x=\wp(z),y=\wp'(z)$ with corresponding period lattice $\Omega$.

Now examples of $\Phi$ are not so easy to write analytically as $z-p$; but we found the following, at first for $S=\{O\}$. Take any period $\omega$ in $\Omega$. It has a corresponding quasi-period $\eta$ defined by $\zeta(z+\omega)=\zeta(z)+\eta$ for the associated Weierstrass zeta function. Then
\begin{equation}\label{zba}
  \Phi^{(\omega)}(z)=e^{\omega\zeta(z)-\eta z}
\end{equation}
is doubly periodic. This is because
\[ \omega\zeta(z+\tilde\omega)-\eta(z+\tilde \omega)=\omega\zeta(z)-\eta z+\omega\tilde\eta-\eta\tilde\omega \]
for any other period $\tilde\omega$ with quasi-period $\tilde\eta$; and the Legendre relations show that $\omega\tilde\eta-\eta\tilde\omega$ is in $2\pi i\bf Z$. In fact we have here the very simplest form of a Baker-Akhiezer function, with an essential singularity at $z=0$; see \cite{Bak1995} Chapter XIV and in particular page xxviii of Krichever's foreword for more general versions, although this particular example does occur, even for arbitrary genus, in Weierstrass \cite[p.\ 312]{Wei1967}. It clearly does not vanish on $\mathcal{E}_S$ for $S=\{O\}$.

See also Pellarin \cite{Pel2013} for analogues of Baker-Akhiezer in positive characteristic.

In fact we get two for the price of one by taking basis elements of $\Omega=\mathbf{Z}\omega_1+\mathbf{Z}\omega_2$ and corresponding $\eta_1$, $\eta_2$ so $\Phi_1$, $\Phi_2$ (note that these depend on the choice of basis, but the multiplicative group they generate does not). As $\zeta'=-\wp$ we find
\[ {\de \Phi_i \over \Phi_i}=-(\omega_i\wp(z)+\eta_i)\de z=-(\omega_ix+\eta_i){\de x \over y} \qquad (i=1,2). \]
There are loops $\mathcal{L}_1,\mathcal{L}_2$ with
\[ \int_{\mathcal{L}_i}{\de x \over y}=\omega_i, \quad \int_{\mathcal{L}_i}{x\de x \over y}=-\eta_i \qquad (i=1,2) \]
(of course the differentials here are possible $\rho_1,\rho_2$ in the modified \eqref{eq:cohom-basis} above) and so the periods we want are
\[ \int_{\mathcal{L}_1}{\de \Phi_1 \over \Phi_1}=0, \quad \int_{\mathcal{L}_2}{\de \Phi_1 \over \Phi_1}=\pm 2\pi i \]
\[ \int_{\mathcal{L}_1}{\de \Phi_2 \over \Phi_2}=\pm 2\pi i, \quad \int_{\mathcal{L}_2}{\de \Phi_2 \over \Phi_2}=0 \]
by the more precise form of Legendre (depending on orientation). This does not quite give $2\pi iI_h=2\pi iI_2$ for the case $S=\{O\}$; but at least we get $2\pi iU$ for unimodular $U$ with $\det U=\pm 1$. This is already enough to imply that any function $\Phi$, analytic on $\mathcal{E}_S$ and never vanishing there, has the form $\Phi=\Phi_1^{m_1}\Phi_2^{m_2}e^\phi$ for $\phi$ also analytic on $\mathcal{E}_S$; which is equivalent to Example~\ref{ex2}.

For more general $S=\{O,P_1,\ldots,P_{s-1}\}$ containing $O$ we found other examples as follows. Suppose $P \neq O$; then we can write $P=(\wp(u),\wp'(u))$ and define
\begin{equation}\label{sig}
  \Psi^{(u)}(z)={\sigma(z-u) \over \sigma(z)}e^{u\zeta(z)}
\end{equation}
for the Weierstrass sigma function (note that this depends on the choice of $u$ but if we change $u$ by a period $\omega$ then $\Psi^{(u)}$ changes by $\Phi^{(\omega)}$ up to constants). It is analytic on $\mathcal{E}$ with $O,P$ removed and never vanishes there. It too is a Baker-Akhiezer function (and almost certainly known to Weierstrass) with its essential singularity at $z=0$. That distinguishes it from a similar expression occurring in the exponential map for a multiplicative extension of $\mathcal{E}$, which has $e^{\zeta(u)z}$ in place of $e^{u\zeta(z)}$.

As $\sigma'/\sigma=\zeta$ we obtain
\begin{equation}\label{wp2}
  {\de \Psi^{(u)} \over \Psi^{(u)}}=(\zeta(z-u)-\zeta(z)-u\wp(z))\de z,
\end{equation}
which by the addition theorem for $\zeta$ is
\[ \left(-\zeta(u)+{1 \over 2}{\wp'(z)+\wp'(u) \over \wp(z)-\wp(u)}-u\wp(z)\right)\de z=-\zeta(u){\de x \over y}-u{x\de x \over y}+\theta_P \]
for the perhaps more classically familiar
\[ \theta_P={1 \over 2}{y+\wp'(u) \over x-\wp(u)}{\de x \over y}. \]
Both have residue divisor $P-O$.

Thus with $P=P_1,\ldots,P_{s-1}$ (of course giving rise to $\sigma_1,\ldots,\sigma_{s-1}$ in \eqref{eq:cohom-basis} above) and $\mathcal{M}_1,\ldots,\mathcal{M}_{s-1}$ as in the proof of Lemma~\ref{lem:non-degen} we obtain in the bottom right block of the period matrix a diagonal matrix with entries $\pm 2\pi i$. So again something unimodular and corresponding $\Psi_1,\ldots,\Psi_{s-1}$.

Therefore any function $\Phi$, analytic on $\mathcal{E}_S$ and never vanishing there, has the form
\begin{equation}\label{ex3}
  \Phi=\Phi_1^{m_1}\Phi_2^{m_2}\Psi_1^{n_1}\cdots\Psi_{s-1}^{n_{s-1}}e^\phi
\end{equation}
for $\phi$ also analytic on $\mathcal{E}_S$.

Occasionally we can find simpler $\Psi$. For example if $P=(e,0)$ is a point of order 2 then $\Psi=x-e=\wp(z)-e$ is analytic on $\mathcal{E}_S$ for $S=\{O,P\}$ and never vanishes there (even without essential singularity). And if say $e=\wp(\omega/2)$ then indeed
\[ \wp(z)-e={(\Psi^{(\omega/2)}(z))^2 \over (\sigma(\omega/2))^2\Phi^{(\omega)}(z)}. \]

And if our $S=\{P_0,P_1,\ldots,P_{s-1}\}$ does not contain $O$ then we can simply use the group law to reduce to $\{O,P_1-P_0,\ldots,P_{s-1}-P_0\}$ thus obtaining for example $\Phi^{(\omega)}(z-u_0)$ in place of \eqref{zba}.

\subsection*{Case $g = 2$} Here it will suffice to deal with a complex hyperelliptic curve $\mathcal{H}$ whose affine part is defined by
\begin{equation}\label{e2}y^2=x^5+b_1x^4+b_2x^3+b_3x^2+b_4x+b_5
\end{equation}
with the discriminant of the right-hand side non-zero. We are therefore using the notation of Grant \cite{Gra1990}. Of course there is no longer a parametrization by $\mathbf{C}$. To obtain the analogue of $\wp$ and so on we must embed $\mathcal{H}$ into its Jacobian, which is parametrized by $\mathbf{C}^2$.

We originally constructed examples of $\Phi$ using theta functions. We fix a matrix $T=\begin{pmatrix} \tau_{1}&\tau \\ \tau&\tau_{2}\end{pmatrix}$ in the Siegel upper half space. We have a standard theta function $\theta(\mathbf{z})$ defined for $\mathbf{z}=\begin{pmatrix} z_1\\ z_2 \end{pmatrix}$ by
\[ \theta(\mathbf{z})=\sum_{\mathbf{p}\in \mathbf{Z}^2}\exp(\pi i (\mathbf{p}^tT\mathbf{p}+2\mathbf{p}^t\mathbf{z})) \]
with column vectors $\mathbf{p}$. It satisfies
\[ \theta({\bf z+e}_1)=\theta(\mathbf{z}), \quad \theta({\bf z+e}_2)=\theta(\mathbf{z}) \]
for $\mathbf{e}_1=\begin{pmatrix}1\\ 0\end{pmatrix},~\mathbf{e}_2=\begin{pmatrix}0\\ 1\end{pmatrix}$, as well as
\[ \theta({\bf z+t}_1)=c_1\exp(-2\pi i z_1)\theta(\mathbf{z}), \quad \theta({\bf z+t}_2)=c_2\exp(-2\pi i z_2)\theta(\mathbf{z}) \]
for $\mathbf{t}_1=\begin{pmatrix}\tau_1\\ \tau\end{pmatrix},~\mathbf{t}_2=\begin{pmatrix}\tau\\ \tau_2\end{pmatrix}$ and constants $c_1$, $c_2$ (see for example \cite[pp.\ 118--120]{Mum1983}).

So the ``Baker zeta functions''
\[ \zeta_1={1 \over \theta}{\partial \theta \over \partial z_1}, \quad \zeta_2={1 \over \theta}{\partial \theta \over \partial z_2} \]
satisfy
\begin{equation}\label{qper1}
  \zeta_1({\bf z+e}_1)=\zeta_1(\mathbf{z}),~\zeta_1({\bf z+e}_2)=\zeta_1(\mathbf{z}),~\zeta_1({\bf z+t}_1)=-2\pi i+\zeta_1(\mathbf{z}),~\zeta_1({\bf z+t}_2)=\zeta_1(\mathbf{z}),
\end{equation}
\begin{equation}\label{qper2}
  \zeta_2({\bf z+e}_1)=\zeta_2(\mathbf{z}),~\zeta_2({\bf z+e}_2)=\zeta_2(\mathbf{z}),~\zeta_2({\bf z+t}_1)=\zeta_2(\mathbf{z}),~\zeta_2({\bf z+t}_2)=-2\pi i+\zeta_2(\mathbf{z}).
\end{equation}
So miraculously
\begin{equation}\label{ft}\Phi_1=e^{\zeta_1}, \quad \Phi_2=e^{\zeta_2}
\end{equation}
are ``quadruply periodic'' (also known to Weierstrass). Suitably translated, these generalize one of the $\Phi^{(\omega)}=e^{\omega\zeta(z)-\eta z}$; in fact here $\omega=1$ and $\eta=0$ in the new normalization.

We need another pair corresponding to $\omega=\tau$. A short calculation shows that
\[ \zeta_3=\tau_1\zeta_1+\tau\zeta_2+2\pi iz_1, \quad \zeta_4=\tau\zeta_1+\tau_2\zeta_2+2\pi iz_2 \]
satisfy
\begin{equation}\label{qper3}
  \zeta_3({\bf z+e}_1)=\zeta_3(\mathbf{z})+2\pi i,~\zeta_3({\bf z+e}_2)=\zeta_3(\mathbf{z}),~\zeta_3({\bf z+t}_1)=\zeta_3(\mathbf{z}),~\zeta_3({\bf z+t}_2)=\zeta_3(\mathbf{z}),
\end{equation}
\begin{equation}\label{qper4}
  \zeta_4({\bf z+e}_1)=\zeta_4(\mathbf{z}),~\zeta_4({\bf z+e}_2)=\zeta_4(\mathbf{z})+2\pi i,~\zeta_4({\bf z+t}_1)=\zeta_4(\mathbf{z}),~\zeta_4({\bf z+t}_2)=\zeta_4(\mathbf{z}).
\end{equation}
Thus
\begin{equation}\label{lt}
  \Phi_3=e^{\tau_1\zeta_1+\tau\zeta_2+2\pi iz_1}, \quad \Phi_4=e^{\tau\zeta_1+\tau_2\zeta_2+2\pi iz_2}
\end{equation}
will do (known of course to Weierstrass).

These are functions on open subsets of $\mathbf{C}^2$, and we get functions on $\mathcal{H}$ by taking restrictions to a one-dimensional analytic set. So we have to understand their poles, which just come from the zeroes of $\theta$. In the usual notation we choose basis elements $\mathcal{A}_1,\mathcal{A}_2,\mathcal{B}_1,\mathcal{B}_2$ (aka $\mathcal{L}_1,\mathcal{L}_2,\mathcal{L}_3,\mathcal{L}_4$ in \eqref{eq:hom-basis} above) for the homology of $\mathcal{H}$ in the standard way and then differentials $\rho_1,\rho_2$ of the first kind on $\mathcal{H}$ normalized such that the respective integrals of $\begin{pmatrix}\rho_1\\ \rho_2\end{pmatrix}$ are the columns $\mathbf{e}_1,\mathbf{e}_2,\mathbf{t}_1,\mathbf{t}_2$. Then
\[ \varepsilon(Q)=\begin{pmatrix}\int_\infty^Q\rho_1\\ \int_\infty^Q\rho_2\end{pmatrix} \]
embeds $\mathcal{H}$ into $\mathbf{C}^2/\Lambda$ for the lattice generated by these columns.

The Riemann Vanishing Theorem implies that there is some $\mathbf{u}_0$ such that $\theta(\mathbf{z})=0$ if and only if there is $Q$ in $\mathcal{H}$ with $\mathbf{z}=\mathbf{u}_0-\varepsilon(Q)$ modulo $\Lambda$ (see for example Corollary~3.6 of \cite[p.\ 160]{Mum1983}). In particular $\theta(\mathbf{u}_0)=0$ (in fact because our $\mathcal{H}$ is hyperelliptic we have $\mathbf{u}_0=\mathbf{e}_1+{1 \over 2}\mathbf{e}_2+{1 \over 2}\mathbf{t}_1+{1 \over 2}\mathbf{t}_2$, in ${1 \over 2}\Lambda$  but not $\Lambda$ -- see for example \cite[3.80, 3.82]{Mum1984} -- however if we wanted to progress to curves of genus $g \geq 3$ which are not hyperelliptic then we should forget this explicit value).

Now the trouble with \eqref{ft} and \eqref{lt} is that theta functions tend to have two zeroes when restricted to $\varepsilon(\mathcal{H})$ (or even infinitely many, such as $\theta(\mathbf{u}_0-\varepsilon(Q))$ for example). We can overcome this problem for $S=\{P_0\}$ provided $P_0$ is not one of the six Weierstrass points, which are $\infty$ and the five points with $y=0$ on \eqref{e2}. Namely consider
\[ \lambda^{(P_0)}(Q)=\theta(\mathbf{u}_0-2\varepsilon(P_0)+\varepsilon(Q)). \]
This vanishes if and only if there is $Q'$ in $\mathcal{H}$ with $\mathbf{u}_0-2\varepsilon(P_0)+\varepsilon(Q)=\mathbf{u}_0-\varepsilon(Q')$ modulo $\Lambda$; that is
\[ \varepsilon(Q)+\varepsilon(Q')=\varepsilon(P_0)+\varepsilon(P_0) \mod \Lambda. \]
If at least one of $Q,Q'$ is not $P_0$, then by Abel-Jacobi there is a rational function on $\mathcal{H}$ with a double or single pole at $P_0$ and no other poles. However by the definition of Weierstrass point that is impossible. Thus $Q=P_0\ (=Q')$ and in particular $\lambda^{(P_0)}$ is not identically zero on $\mathcal{H}$.

Thus \eqref{ft} and \eqref{lt} restricted to $\mathbf{z}=\mathbf{u}_0-2\varepsilon(P_0)+\varepsilon(Q)$ provide functions
\[ \Phi_1^{(P_0)},\ \Phi_2^{(P_0)},\ \Phi_3^{(P_0)},\ \Phi_4^{(P_0)} \]
analytic on $\mathcal{H}_S$ (for this singleton $S$) never vanishing there (also Weierstrass-Baker-Akhiezer).

As for the periods, we have for example $\de \Phi_1/\Phi_1=\de \zeta_1$ and so the integral of $\Phi_1^{(P_0)}$ around say $\mathcal{A}_1$ corresponds to the change in $\zeta_1(\mathbf{u}_0-2\varepsilon(P_0)+\varepsilon(Q))$, which is zero by the first of \eqref{qper1}. So we see that the periods of $\de \Phi_1^{(P_0)}/\Phi_1^{(P_0)}$ are
\[ 0,\ 0,\ \pm2\pi i,\ 0; \]
(depending on orientation) and likewise from \eqref{qper2} the periods of $\de \Phi_2^{(P_0)}/\Phi_2^{(P_0)}$ are
\[ 0,\ 0,\ 0,\ \pm2\pi i. \]
And from \eqref{qper3}, \eqref{qper4} we find that the periods of $\de \Phi_3^{(P_0)}/\Phi_3^{(P_0)},\de \Phi_4^{(P_0)}/\Phi_4^{(P_0)}$ are
\[ \pm2\pi i,\ 0,\ 0,\ 0 \]
\[ 0,\ \pm2\pi i,\ 0,\ 0 \]
respectively. So once again we get a matrix $M_0=2\pi iU$ for unimodular $U$. Thus any function $\Phi$, analytic on $\mathcal{H}_S$ for this $S=\{P_0\}$ and never vanishing there, has the form
\begin{equation}\label{ex4}
  \Phi=(\Phi_1^{(P_0)})^{m_1}(\Phi_2^{(P_0)})^{m_2}(\Phi_3^{(P_0)})^{m_3}(\Phi_4^{(P_0)})^{m_4}e^\phi
\end{equation}
for $\phi$ also analytic on $\mathcal{H}_S$.

But what about $S=\{P_0\}$ for a Weierstrass point $P_0$? The above fails because $\lambda^{(P_0)}$ is then identically zero on $\mathcal{H}$ thanks to the function $x$ or $1/(x-e)$ for the zeroes $e$ of the right-hand side of \eqref{e2}.

If $P_0=\infty$ for example, then we could try to flip back into the construction in Section~\ref{periods}, because $\rho_1,\rho_2,\rho_3,\rho_4$ can be taken as the well-known
\begin{equation}\label{dsk}
  {\de x \over y},~{x\de x \over y},~{x^2\de x \over y},~{x^3\de x \over y}.
\end{equation}
The right linear combinations appear to be connected to the ``Legendre relations'' of \cite[p.\ 14]{Bak1907}, and then one would have to integrate and exponentiate. However the analogue of \eqref{dsk} for a Weierstrass point $P_0 \neq \infty$ seems messy.

Alternatively here is a dirty trick that works for any Weierstrass point $P_0$. We choose any $Q_0$ not a Weierstrass point and as above
\[ \lambda^{(Q_0)}(Q)=\theta(\mathbf{u}_0-2\varepsilon(Q_0)+\varepsilon(Q)) \]
has a double zero at $Q=Q_0$ and no other zero. Similar arguments show that
\[ \mu^{(Q_0)}(Q)=\theta(\mathbf{u}_0-\varepsilon(Q_0)-\varepsilon(P_0)+\varepsilon(Q)) \]
has simple zeroes at $Q=Q_0,P_0$ and no other zero. Thus $(\mu^{(Q_0)})^2/\lambda^{(Q_0)}$ has a double zero at $Q=P_0$ and no other zeroes or poles. And so
\[ \exp\bigl(2\zeta_i(\mathbf{u}_0-\varepsilon(Q_0)-\varepsilon(P_0)+\mathbf{z})-\zeta_i(\mathbf{u}_0-2\varepsilon(Q_0)+\mathbf{z})\bigr) \qquad (i=1,2) \]
are the analogues of \eqref{ft} for example.

And the new period matrix is just $2M_0-M_0=M_0$ the old period matrix.

This settles \eqref{ex4} for singletons $S=\{P_0\}$. For general $S=\{P_0,P_1,\ldots,P_{s-1}\}~(s \geq 2)$ we write down the analogue of \eqref{sig} as
\begin{equation}\label{sig2}\Psi^{(\mathbf{u})}(\mathbf{z})={\theta({\bf z-u}) \over \theta(\mathbf{z})}e^{u_1\zeta_1(\mathbf{z})+u_2\zeta_2(\mathbf{z})}, \qquad \mathbf{u}=\begin{pmatrix}u_1\\ u_2\end{pmatrix},
\end{equation}
also quadruply periodic. If $P \neq \infty$ then
\[ \theta(\mathbf{u}_0-\varepsilon(P)+\varepsilon(Q)) \]
vanishes at $Q=P,\infty$ and nowhere else. So if also $P_0 \neq \infty$ then taking $\mathbf{z}=\mathbf{u}_0-\varepsilon(P_0)+\varepsilon(Q)$ and $\mathbf{u}=\varepsilon(P)-\varepsilon(P_0)$ in \eqref{sig2} we get a function $\Psi^{(P_0,P)}$ analytic on $\mathcal{H}$ with $P_0$, $P$ removed and never vanishing there (also Baker-Akhiezer).

Then $\de \Psi^{(P_0,P)}/\Psi^{(P_0,P)}$ is a rational differential on $\mathcal{H}$ whose residue divisor is the divisor $P-P_0$ of $\Psi^{(P_0,P)}$ itself, and so integrating over a small loop around $P$ gives $\pm 2\pi i$. In fact it is only $\theta({\bf z-u})$ in \eqref{sig2} that causes the pole of the differential at $P$.

One could go further as in \eqref{wp2} by using the ``Baker $\wp$ functions''
\[ \wp_{ij}=-{\partial \zeta_i \over \partial z_j} \qquad (i,j=1,2) \]
and even $\wp_{ijk}$ (not quite as in \cite[p.\ 38]{Bak1907} or \cite[p.\ 99]{Gra1990}), and this would lead to the corresponding differentials \eqref{eq:cohom-basis} on $\mathcal{H}$.

Anyway, doing the above for $P=P_1,\ldots,P_{s-1}$ leads to the usual unimodular matrix, so we conclude that if $S=\{P_0,P_1,\ldots,P_{s-1}\}$ does not contain $\infty$ then any function $\Phi$, analytic on $\mathcal{H}_S$ and never vanishing there, has the form
\begin{equation}\label{ex5}
  \Phi=\left(\Phi_1^{\left(P_0\right)}\right)^{m_1}\left(\Phi_2^{\left(P_0\right)}\right)^{m_2}\left(\Phi_3^{\left(P_0\right)}\right)^{m_3}\left(\Phi_4^{\left(P_0\right)}\right)^{m_4}\left(\Psi^{\left(P_0,P_1\right)}\right)^{n_1}\cdots\left(\Psi^{\left(P_0,P_{s-1}\right)}\right)^{n_{s-1}}e^\phi
\end{equation}
for $\phi$ also analytic on $\mathcal{H}_S$.

Presumably $\infty$ can be handled with more dirty tricks.

Probably these constructions extend to any genus $g \geq 3$, at least if $P_0,P_1,\ldots,P_{s-1}$ are in ``general position''.

\section{Effectivity}\label{effectivity}
Here we sketch some possibilities for effective versons of Theorem~\ref{thm1}. For simplicity we restrict ourselves to genus $g=0$.

The basic idea can be illustrated with $n=1$ and $\mathcal{V}$ in $\mathbf{C} \times \mathbf{C}^*$ defined by $X_1=\hat X_1$; so we want a zero of $\Phi(z)=e^z-z$. We will localize in the sense of finding some explicit $R$ such that there is a zero $z_0$ with $|z_0| \leq R$. Of course there are many direct ways of doing this, but they do not extend to general $n$.

So let us suppose to the contrary that for some $R>0$ there is no $z_0$ with $\Phi(z_0)=0$ and $|z_0| \leq R$. Now the (classical) argument shows that $\Phi=e^\phi$ for some $\phi$ analytic on the disc $|z| \leq R$. So
\[ \Re \phi(z) = \log|e^z-z| \leq \log(e^R+R) \]
on this disc.

As $e^{\phi(0)}=\Phi(0)=1$ we can assume $\phi(0)=0$. Then Lemma~\ref{lem2} with $r=R/2$ gives
\[ \sup_{|z| \leq R/2}|\phi(z)| \leq 2\log(e^R+R). \]

This says that $|\phi|=O(R)$ on ``large'' discs; thus $\phi$ ought to be ``almost'' a linear polynomial $az+b$. More precisely if $\phi(z)=\sum_{k=0}^\infty a_kz^k$ then we could show that the $a_k~(k=2,3,\ldots)$ are ``small'', so that $\Phi(z)$ is ``near'' $e^{az+b}$. As one might guess from \eqref{ab} in the discussion of Section~\ref{intro}, this could be disproved with an appropriate ``effective'' extension of Ax's Theorem. In fact such an extension can be supplied; however here we can take a short cut as follows.

We look at just
\begin{equation}\label{aest}
  |a_2|=\left|{1 \over 2\pi i}\int_{|z|=R/2}{\phi(z) \over z^3}\de z\right| \leq 8{\log(e^R+R) \over R^2}.
\end{equation}
On the other hand (recall $a_0=0$)
\begin{equation}\label{tay}
  1+{1 \over 2}z^2+\cdots~=~e^z-z~=~e^{\phi(z)}~=~1+a_1z+\left(a_2+{1 \over 2}a_1^2\right)z^2+\cdots
\end{equation}
and we deduce $a_1=0$ and $a_2=1/2$. This contradicts \eqref{aest} for $R=17$.

For $n=2$ and say $X_1X_2=1,~\hat X_1+\hat X_2=1$ we propose to find $R$ such that there is a zero $z_0$ of $\Phi(z)=e^z+e^{1/z}-1$ with $1/R \leq |z_0| \leq R$. Now we have to use Laurent series, and we can show that $\Phi(z)=z^me^{\phi(z)}$ holds as in \eqref{HFT} with $\phi(z)=\sum_{k=-\infty}^\infty a_kz^k$, and that $a_k$ for $k \geq 3$ and $k \leq -3$ are ``small''. Thus $\phi$ is near $az^2+bz+c+d/z+e/z^2$ as in \eqref{lau}. This time a short cut by equating coefficients does not seem so easy as in \eqref{tay}, but ``effective Ax'' can be used, or also repeated differentiation (here five times suffice) to deduce a contradiction for large $R$; provided we can also estimate the exponent $m$.

This latter problem seems not entirely trivial. If we take $R>1$, then the method shows that
\begin{equation}\label{wind}
  m=\pm{1 \over 2\pi i}\int_{|z|=1}{\de \Phi \over \Phi}.
\end{equation}
We politely asked Maple to compute this but it refused to answer. Finally we realized that this was due to zeroes $z_0$ with $|z_0|=1$. Writing $z=e^{i t}$, drawing a graph to guess a rough solution and then refining with Newton gives indeed the pair
\[ -0.08285557733006468223\ldots\pm.9965615652358371338\ldots i. \]
So we did actually stumble on zeroes!

Incidentally, this leads to a one-sentence proof that there is a zero, because $\Phi(e^{it})$ is continuous from $\bf R$ to $\bf R$ with value $2e-1>0$ at $t=0$ and value $2e^{-1}-1<0$ at $t=\pi$.

If $R>2$ and for some reason we had taken say $|z|=2$ in \eqref{wind} then Maple would have obliged with something looking suspiciously like $m=1$ (and other more theoretical considerations lead to a rigorous bound -- we found $|m| \leq 62$ for example). Such shady calculations suggest that there is a pair of zeroes with $7<|z_0|<8$ (our own value for $R$ was about $10^8$).

\bibliographystyle{amsplain}
\bibliography{References}

\end{document}